\numberwithin{equation}{section} 
\title{Schur complement Domain Decomposition Methods for the solution of multiple scattering problems}
\author{Michael Pedneault, Catalin Turc, Yassine Boubendir}
\newtheorem{theorem}{Theorem}[section]
\newtheorem{remark}[theorem]{Remark}
\newenvironment{proof}{\hspace{0.5cm} {\bf Proof.}}
{$\quad {}_\blacksquare$\vspace{0.3cm}}
\date{}
\newcommand{\triple}[1]{{\left\vert\kern-0.25ex\left\vert\kern-0.25ex\left\vert #1 
    \right\vert\kern-0.25ex\right\vert\kern-0.25ex\right\vert}}
\begin{document}
\maketitle
\begin{abstract}
  We present a Schur complement Domain Decomposition (DD) algorithm for the solution of frequency domain multiple scattering problems. Just as in the classical DD methods we (1) enclose the ensemble of  scatterers in a domain bounded by an artificial boundary, (2) we subdivide this domain into a collection of nonoverlapping subdomains so that the boundaries of the subdomains do not intersect any of the scatterers, and (3) we connect the solutions of the subproblems via Robin boundary conditions matching on the common interfaces between subdomains. We use subdomain Robin-to-Robin maps to recast the DD problem as a sparse linear system whose unknown consists of Robin data on the interfaces between subdomains---two unknowns per interface. The Robin-to-Robin maps are computed in terms of well-conditioned boundary integral operators. Unlike classical DD, we do not reformulate the Domain Decomposition problem in the form a fixed point iteration, but rather we solve the ensuing linear system by Gaussian elimination of the unknowns corresponding to inner interfaces between subdomains via Schur complements. Once all the unknowns corresponding to inner subdomains interfaces have been eliminated, we solve a much smaller linear system involving unknowns on the inner and outer artificial boundary. We present numerical evidence that our Schur complement DD algorithm can produce accurate solutions of very large multiple scattering problems that are out of reach for other existing approaches. 
 \newline \indent
  \textbf{Keywords}: multiple scattering, domain decomposition methods.\\
   
 \textbf{AMS subject classifications}: 
 65N38, 35J05, 65T40,65F08
\end{abstract}

\section{Introduction\label{intro}}

The numerical simulation of interaction of acoustic, electromagnetic, and elastic waves with large ensembles/clouds of scatterers, collectively referred to as multiple scattering, plays an important role in a variety of applied fields such as seismology, meteorology, remote sensing, and underwater acoustics, to name but a few. The excellent monograph of Martin~\cite{martin2006multiple} contains a comprehensive account of both theoretical and numerical developments in this field. 

While the direct extension of single scatterer solvers to multiple scatterers is in principle straightforward, solvers in the latter case are confronted by considerably larger-sized problems that exhibit increasingly worse conditioning properties which can be attributed to the need to resolve complicated multiple reflections between scatterers. Thus, Krylov subspace iterative solvers for the associated linear algebra problems typically require very large numbers of iterations. Although certain preconditioning strategies can alleviate this issue to some extent in the diffuse case (e.g. when the distances between scatterers are large with respect to the wavelength of the probing incident wave)~\cite{antoine2008numerical,antoine2012wide}, general purpose preconditioners that work effectively throughout the frequency range are difficult to construct for boundary integral solvers for multiple scattering problems. 

On account of the limitations recounted above, the solution of multiple scattering problems involving large ensembles of scatterers has been approached through various approximations that render the computations tractable yet do not control the errors incurred. One of the most popular approaches is the Lax-Foldy method~\cite{foldy1945multiple,lax1951multiple} in which a multiple scattering scheme is set up to account for contributions on any one of the scatterers by the rest of the scatterers wherein the scatterers are replaced by point isotropic scatterers. Another widely used algorithm for solution of multiple scattering problems is the T-matrix method pioneered by Waterman~\cite{waterman1965matrix}. The main idea in this method is to use particular solutions of Helmholtz equation to construct functional bases for incoming fields  and outgoing (i.e. radiative) fields and to assign an operator between incoming fields impinging on a given scatterer and fields scattered by it using decompositions in those incoming/outgoing bases. This operator describes completely the geometrical and material properties of a single scatterer.  Using the T-matrix framework, the solution of multiple scattering problems consists of combining the T-matrices for each individual scatterer in the ensemble in a large linear system.  Truncated T-matrices can be computed by null-fields methods~\cite{waterman1965matrix} or more reliably and whenever possible by boundary integral equation methods~\cite{gurel1992recursive,martin2006multiple,lai2014fast}. However, the T-matrix method that uses spherical multipole expansions suffers from numerical instabilities associated with fast growth of Hankel functions~\cite{martin2006multiple}, and it was only recently that robust bases functions for T-matrix methods have been proposed and analyzed~\cite{ganesh2012convergence}.

We approach the multiple scattering problem with Domain Decomposition Methods (DDM) which are divide and conquer strategies for solution of large-sized problems whose direct solutions is too costly or out of reach to existing resources. In a nutshell, DDM decompose the original problem (typically associated to a PDE) to be solved in a certain computational domain into subproblems associated to subdomains, so that each subproblem can be solved efficiently with existing methods. The solutions of each of these subproblems are interconnected via boundary conditions that reflect properties of the solution of the original problem. The latter solution is typically retrieved through a fixed point iterative procedure from the subproblem solutions~\cite{Depres,Nataf}. However, the rate of convergence of the fixed point iterations is very slow~\cite{boubendirDDM}. In order to accelerate the speed of convergence of iterative DD algorithms, carefully designed transmission operators have been incorporated in the Robin data~\cite{Gander1,boubendirDDM}. 

We apply the DD strategy to multiple scattering problems by enclosing the ensemble/cloud of  scatterers in a domain bounded by an artificial boundary, and we proceed by subdividing this domain into a collection of nonoverlapping subdomains so that the (artificial) boundaries of the subdomains do not intersect any of the scatterers. The original scattering problems is thus decomposed into a sequence of multiple scattering subproblems in each of the subdomains. Following the common practice in DD methods for wave problems we connect the solutions of the subproblems via Robin boundary conditions matching on the common interfaces between subdomains~\cite{Depres}. Our DD approach is a direct solver that uses subdomain Robin-to-Robin maps defined as the operators that return outgoing Robin data on the boundary of the subdomain corresponding to solutions of the Helmholtz equation in that subdomain with (a) relevant physical boundary conditions on the scatterers included in the subdomain and (b) incoming Robin boundary conditions on the boundary of the subdomain. We use these Robin to Robin maps associated to each of the subdomains to recast the DD formulation for the solution of the multiple scattering problem into the form of a linear system whose unknown consists of global Robin data defined on the interfaces between subdomains---two unknowns per each interface. The matrix corresponding to this linear system has a block-sparse structure, the distributions of the populated blocks in the global matrix corresponding to the interconnectivity between the subdomains. Harkening back to ideas pertaining to nested dissection methods~\cite{George:1973:NDR} and multifrontal methods~\cite{Duff:1983:MSI:356044.356047} for the solution of sparse linear algebra problems related to finite difference/finite element discretizations, we solve the ensuing linear system by Gaussian elimination of the unknowns corresponding to inner interfaces between subdomains via Schur complements. We prove rigorously that the Schur complement elimination procedure does not break down. Once all the unknowns corresponding to inner subdomains interfaces have been eliminated, we reduce the original linear system of equations to a much smaller one involving unknowns on the inner and outer artificial boundary. Basically, if $\mathcal{O}(N)$ unknowns are needed for the solution of the global multiple scattering problem, our final stage linear system requires only $\mathcal{O}(N^{1/2})$ unknowns.

The idea of using Robin-to-Robin maps as robust alternative to the more popular Dirichlet to Neumann maps can be traced back to the work~\cite{Fliss2} where it was used to good effect for calculations involving  periodic waveguides containing defects/perturbations; see also~\cite{Fliss1} for a more recent application to computation of guided modes in photonic crystal waveguides. The ideas of using Schur complements for solution of DDM for wave propagation problems was presented in~\cite{bendali} in the context of scattering by deep cavities. The Schur complement elimination procedure that is central to our algorithm is equivalent to a hierarchical merging of the subdomains Robin-to-Robin maps to compute the global interior Robin-to-Robin map of the domain that contains inside the cloud of scatterers. The same idea was used in~\cite{Gillman1} for the solution of scattering problems in variable media, where subdomain spectral solvers are merged via Robin-to-Robin maps. This idea harkens back to the multidomain spectral solvers introduced in~\cite{orszag1980spectral,kopriva1998staggered,pfeiffer2003multidomain}. Similar ideas were used recently for multiple scattering problems~\cite{Bennetts1} by random arrays of circular scatterers where the authors merge subdomain (slabs in their case) solutions via Dirichlet-to-Neumann operators. The authors in~\cite{Bennetts1} refer to their algorithm as slab-clustering technique, and solve each slab (subdomain) problems with addition theorem multipole techniques for circular scatterers. Another application of DD Schur complement techniques can be found in computing in a stable manner the impedance of layered elastic media~\cite{norris2013stable}.

The central component of our algorithm is the use of Robin-to-Robin maps for subdomain problems that involve a collection of scatterers enclosed by an artificial boundary. The Robin data is exchanged on the artificial boundary and physically relevant boundary conditions are imposed on the scatterers, assumed to be homogeneous. We present a robust boundary integral operators based representation of the Robin-to-Robin maps that uses the regularization ideas developed in~\cite{turc1,turc_corner_N}. We show that the polynomially graded mesh Nystr\"om method introduced in~\cite{turc_corner_N,dominguez2015well,turc2016well} for discretization of Helmholtz boundary integral operators in Lipschitz domains leads to efficient calculations via direct solvers of subdomain Robin-to-Robin maps for two-dimensional multiple scattering problems. Once each subdomain Robin-to-Robin map is computed, we proceed with the hierarchical Schur complement elimination procedure that involves computing inverses of small and well conditioned matrices.  In the final stage of our algorithm we solve directly a linear system that involves interior and exterior Robin-to-Robin maps on the boundary of the domain that encloses the ensemble of scatterers. This last inversion turns out to be the dominant contributor to the computational cost of our algorithm: if $\mathcal{O}(N)$ discretization points are needed on the scatterers, the cost of our Schur complement DD algorithm is $\mathcal{O}(N^{3/2})$. More importantly, since we essentially construct a direct solver for multiple scattering problems, multiple incidences can be treated with virtually no additional overhead. We present numerical evidence that our Schur complement DD algorithm gives rise to important computational savings over direct methods for the solution of multiple scattering problems.

\parskip 2pt plus2pt minus1pt

\section{Domain decomposition approach for multiple scattering problems\label{MS}}

We consider the problem of scattering by an ensemble of multiple disjoint scatterers $S_{p}, p=1,\ldots,P$, that is find the scattered field $u^s$ such that
\begin{eqnarray}\label{scatt}
  \Delta u^s +k^2u^s&=&0\ {\rm in}\ \mathbb{R}^2\setminus \cup_{p=1}^P S_{p}\nonumber\\
  u^s+u^{inc}&=&0\ {\rm on}\ \partial S_{p},\ p=1,\ldots,P\nonumber\\
  \lim_{|r|\to\infty}r^{1/2}(\partial u^s/\partial r - iku^s)&=&0
\end{eqnarray}
where $k$ is a positive wavenumber and $u^{inc}$ is an incident field assumed to be a solution of the Helmholtz equation. The method of solution proposed in this paper can be extended to more general physical boundary conditions of the form $\mathcal{B}_{p}(u^s+u^{inc},\partial_{n_p} u^{s}+\partial_{n_p} u^{inc})=0$ on $\partial S_{p}$ (e.g. Neumann, mixed Dirichlet-Neumann, transmission) where the operators $\mathcal{B}_{p},\ p=1,\ldots,P$ are assumed to be linear and to give rise to well posed Helmholtz problems~\eqref{scatt}. In equation~\eqref{scatt}, the exterior unit normals to the domains $S_p$ are denoted by $n_p$. The scatterers $S_{p}$ are assumed to be either closed Lipschitz scatterers or open scatterers (e.g. cracks).

{\em Assumption: We assume that the collection of scatterers $S_{p}, p=1,\ldots,P$ is contained in a box $B_0$ that is the union of $L$ non-overlapping boxes $B_j, j=1,\ldots,L$ such that a given box $B_j$ contains in its interior the scatterers $S_{j_1},\ldots,S_{j_Q}$, with $\cup_{j=1}^L\left(\cup_{q=1}^Q S_{j_q}\right)=\cup_{p=1}^P S_p$.  This assumption can be made more general by requiring that the box $B_0$ is a union of non-overlapping subdomains $\Omega_j$ such that none of the boundaries of those subdomains intersects one of scatterers. We also assume that the arrangement of boxes $B_j,j=1,\ldots,L$ is two-dimensional, that is there are points on the skeleton $\cup_{j=1}^L\partial B_j$ that belong to the boundaries of four distinct subdomains.}

A Domain Decomposition (DD) approach for the scattering problem~\eqref{scatt} consists of defining the subdomain solutions
\begin{equation}\label{uj}
  u_j:=(u^s+u^{inc})|_{B_j\setminus \cup_{q=1}^Q S_{j_q}},\ 1\leq j\leq L,\quad u_0:=u^s|_{\mathbb{R}^2\setminus B_0}
\end{equation}
with Robin boundary conditions matching on the common interfaces between the subdomains $B_j$. More precisely,  for two adjacent subdomains $B_j$ and $B_\ell$, with $1\leq j,\ell\leq L$ that share a common interface we denote by $n_j$ is the unit normal on $\partial B_j\cap \partial B_{\ell}$ pointing toward the domain $B_{\ell}$, and $n_{\ell}$ is the unit normal on $\partial B_{\ell}\cap \partial B_{j}$
pointing toward the domain $B_{j}$ respectively, so that $n_j=-n_{\ell}$ on $\partial B_{j}\cap \partial B_{\ell}$. We enforce the continuity of $u^s+u^{inc}$ and its normal derivative on the common interfaces between adjacent boxes $B_j$ and $B_{\ell}$
 \[
 u_j|_{\partial B_j\cap \partial B_{\ell}} = u_{\ell}|_{\partial B_j\cap \partial B_{\ell}}=u|_{\partial B_j\cap \partial B_{\ell}},\qquad \partial_{n_j} u_j|_{\partial B_j\cap \partial B_{\ell}} = -\partial_{n_{\ell}} u_{\ell}|_{\partial B_j\cap \partial B_{\ell}}=\partial_{n_j} u|_{\partial B_j\cap \partial B_{\ell}}
 \]
 in the classical form of Robin boundary conditions matching on the interfaces between subdomains 
 \begin{eqnarray}\label{boxj}
   \Delta u_j+k^2u_j&=&0\ {\rm in}\ B_j\setminus \cup_{q=1}^Q S_{j_q}\quad u_j=0\ {\rm on}\ \partial S_{j_q},\ q=1,\ldots,Q\\
   \partial_{n_j} u_j -i\eta\ u_j &=&-\partial_{n_{\ell}} u_{\ell}-i\eta\
u_{\ell}\ {\rm on}\ \partial B_{j}\cap \partial B_{\ell}\nonumber,\qquad \eta>0
 \end{eqnarray}
 for all $1\leq j,\ell\leq L$ such that the subdomains $B_j$ and $B_\ell$ share a common edge. For subdomains $B_j, 1\leq j\leq L$ that share an edge with $\partial B_0$ we use the additional Robin boundary data matching
 \begin{equation}\label{robin3}
    \partial_{n_j} u_j -i\eta\ u_j =-\partial_{n_{0}} (u^0+u^{inc})-i\eta\
(u^0+u^{inc})\ {\rm on}\ \partial B_{j}\cap \partial B_{0}
    \end{equation}
where $u^0$ is the solution to the following Helmholtz equation in $\mathbb{R}^2\setminus B_0$ 
\begin{eqnarray*}
  \Delta u_0 +k^2u_0&=&0\ {\rm in}\ \mathbb{R}^2\setminus B_0\nonumber\\
  \partial_{n_0} u_0 -i\eta u_0&=&-\partial_{n_j}(u_j-u^{inc})-i\eta (u_j-u^{inc})\qquad {\rm on}\ \partial B_0\cap \partial B_j\nonumber\\
  \lim_{|r|\to\infty}r^{1/2}(\partial u_0/\partial r - iku_0)&=&0
\end{eqnarray*}
and $\partial_{n_0}$ is the normal derivative on $\partial B_0$ with respect to the unit normal exterior $n_0$ to $ B_0$. Given that each of the subproblems in the subdomains $B_j,j=1,\ldots,L$ and $\mathbb{R}^2\setminus B_0$ are well posed (see Section~\ref{well-posed}), the DD formulation is equivalent to the original problem~\eqref{scatt}. Classicaly, the DD formulation is solved via fixed point iterations~\cite{Depres,Nataf}. However, the rate of convergence of iterative DD is very slow, a possible remedy being the use of carefully designed transmission operators~\cite{boubendirDDM} in matching of Robin data. In contrast, our DD approach computes the global data $g$ defined as
\[
g=\{g_{j\ell}:=(\partial_{n_j}u_j-i\eta\ u_j)|_{\partial B_j\cap\partial B_\ell},\ 0\leq j,\ell\leq L,\ meas(\partial B_j\cap\partial B_\ell)\neq 0\}
\]
through a {\em direct solver} of the linear system whose unknown is $g$
\begin{equation}\label{system_A}
  \mathcal{D}g=G
\end{equation}
which results from rewriting equations~\eqref{boxj} and~\eqref{robin3}. The matrix operator $\mathcal{D}$ can be written explicitly in terms of subdomain Robin-to-Robin (RtR) maps/operators which we show in Section~\ref{well-posed} to be well defined for all wavenumbers $k$. Indeed, to each of these subproblems we associate a RtR map
 \begin{equation}\label{RtRboxj}
   \mathcal{S}^j(\psi_j):=(\partial_{n_j} u_j+i\eta\ u_j)|_{\partial B_j}
 \end{equation}
 where $u_j$ is the solution of the following problem:
 \begin{eqnarray*}
   \Delta u_j+k^2u_j&=&0\ {\rm in}\ B_j\setminus \cup_{q=1}^Q S_{j_q}\quad u_j=0\ {\rm on}\ \partial S_{j_q},\ q=1,\ldots,Q\\
   \partial_{n_j} u_j-i\eta u_j&=&\psi_j\ {\rm on}\ \partial B_j.
 \end{eqnarray*}
 In addition to the RtR operators $\mathcal{S}^j$ we will make use of subdomain to scatterer Robin-to-Cauchy data operators
 \begin{equation}\label{eq:Yj}
   Y^j(\psi_j):=(u_j,\partial_{n_{j_q}} u_j)|_{\cup_{q=1}^Q S_{j_q}}.
   \end{equation}
 We note that knowledge of subdomain Robin data $g$ and the operators $Y^j$ allows us to compute the solution of the problem~\eqref{scatt} via Green's identities. In order to make the notation more suggestive, we will refer in what follows to the argument of the operator $\mathcal{S}^j$ defined in equation~\eqref{RtRboxj} in the form
 \[
 \mathcal{S}^j(\partial_{n_j}u_j-i\eta\ u_j)|_{\partial B_j}=(\partial_{n_j} u_j+i\eta\ u_j)|_{\partial B_j}.
 \]
 \begin{figure}
\centering
\includegraphics[width=60mm]{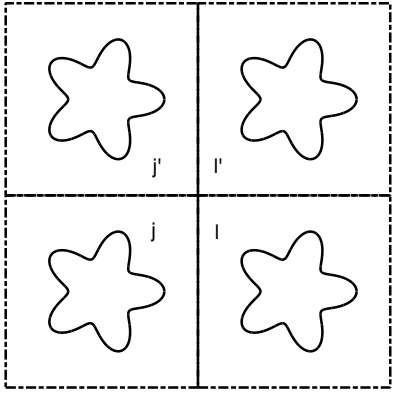}
\caption{A four subdomain configuration.}
\label{fig:4domains}
\end{figure}
With the aid of these operators, we show next how the inner interface data $(g_{j\ell},g_{\ell j})$, $(g_{j',\ell'},g_{\ell' j'}$, $(g_{jj'},g_{j'j})$, and $(g_{\ell\ell'},g_{\ell',\ell})$ corresponding to a four subdomain configuration depicted in Figure~\ref{fig:4domains} (we assume that this is a subset of a bigger subdomain ensemble and that none of the four subdomains has an edge in common with $\partial B_0$) can be eliminated via Schur complements from the linear system~\eqref{system_A}. To that end we define next interface subdomain RtR maps. For the sake of brevity we present these in the case of the subdomain $j$ in which case these maps amount to splitting the operator $\mathcal{S}^j$ in block form as
\[
\mathcal{S}^{j}=\begin{bmatrix}\mathcal{S}^j_{\ell j,j\ell}&\mathcal{S}^j_{\ell j,jj'} & \mathcal{S}^{j}_{\ell j,j\hat{j}}\\ \mathcal{S}^j_{j'j,j\ell}& \mathcal{S}^j_{j'j,jj'} & \mathcal{S}^j_{j'j,j\hat{j}} \\ \mathcal{S}^j_{\hat{j}j,j\ell}& \mathcal{S}^j_{\hat{j}j,jj'} & \mathcal{S}^j_{\hat{j}j,j\hat{j}} \end{bmatrix}
\]
so that the block components of the operator $\mathcal{S}^j$ have the precise definition
\begin{equation}\label{split1}
\begin{bmatrix}\mathcal{S}^j_{\ell j,j\ell}&\mathcal{S}^j_{\ell j,jj'} & \mathcal{S}^{j}_{\ell j,j\hat{j}}\\ \mathcal{S}^j_{j'j,j\ell}& \mathcal{S}^j_{j'j,jj'} & \mathcal{S}^j_{j'j,j\hat{j}} \\ \mathcal{S}^j_{\hat{j}j,j\ell}& \mathcal{S}^j_{\hat{j}j,jj'} & \mathcal{S}^j_{\hat{j}j,j\hat{j}} \end{bmatrix}\begin{bmatrix}g_{j\ell}\\g_{jj'}\\g_{j\hat{j}}\end{bmatrix}=\begin{bmatrix}(\partial_{n_j}u_j+i\eta u_j)|_{\partial B_j\cap \partial B_\ell}\\(\partial_{n_j}u_j+i\eta u_j)|_{\partial B_j\cap \partial B_{j'}}\\(\partial_{n_j}u_j+i\eta u_j)|_{\partial B_{\hat{j}}}\end{bmatrix}
\end{equation}
where we denoted $\partial B_{\hat{j}}:=\partial B_j\setminus(\partial B_\ell\cup\partial B_{j'})$. Alternatively, the interface subdomain RtR maps can be defined by considering Helmoltz problems with Robin data that is equal to zero on the complement of the interface on the boundary of the subdomain. Reordering conveniently the interface unknowns $g_{j\ell}$ we present in detail the block of the matrix $\mathcal{D}$ featuring in the linear system~\eqref{system_A} from which the unknowns $(g_{j\ell},g_{\ell j})$, $(g_{j',\ell'},g_{\ell' j'}$, $(g_{jj'},g_{j'j})$, and $(g_{\ell\ell'},g_{\ell',\ell})$ are eliminated:
\begin{equation}\label{matrixjljp}
  \begin{bmatrix}
  I & \mathcal{S}^\ell_{j\ell,\ell j}& 0 & 0 & 0 & \mathcal{S}^\ell_{j\ell,\ell\ell'}& 0 & 0 &\ldots\\
  \mathcal{S}^j_{\ell j,j\ell} & I & 0 & 0 & \mathcal{S}^j_{\ell j, jj'}& 0 & 0 & 0 & \ldots\\
  0 & 0 & I & \mathcal{S}^{\ell'}_{j'\ell',\ell' j'} & 0 & 0 & 0 & \mathcal{S}^{\ell'}_{j'\ell',\ell'\ell} & \ldots\\
  0 & 0 & \mathcal{S}^{j'}_{\ell' j', j' \ell'}& I & 0 & 0 & \mathcal{S}^{j'}_{\ell' j', j'j}& 0 & \ldots\\
  0 & 0 & \mathcal{S}^{j'}_{j j', j'\ell'} & 0 & I & 0 & \mathcal{S}^{j'}_{j j', j'j} & 0 & \ldots\\
  0 & 0 & 0 & \mathcal{S}^{\ell'}_{\ell \ell', \ell' j'} & 0 & I & 0 & \mathcal{S}^{\ell'}_{\ell \ell', \ell' \ell}& \ldots\\
  \mathcal{S}^{j}_{j'j, j\ell} & 0 & 0 & 0 &  \mathcal{S}^{j}_{j'j, jj'} & 0 & I & 0& \ldots\\
  0 & \mathcal{S}^{\ell}_{\ell'\ell, \ell j} & 0 & 0 & 0 & \mathcal{S}^{\ell}_{\ell'\ell, \ell \ell'} & 0 & I & \ldots\\
  \ldots & \ldots & \ldots & \ldots & \ldots & \ldots & \ldots & \ldots & \ldots \\
  \end{bmatrix}
  \begin{bmatrix}g_{j\ell}\\g_{\ell j}\\g_{j'\ell'} \\ g_{\ell' j'}\\g_{j j'}\\g_{\ell\ell'}\\g_{j'j}\\g_{\ell'\ell}\\\ldots\end{bmatrix}=\begin{bmatrix}0\\0\\0\\0\\0\\0\\0\\0\\\ldots\end{bmatrix}.
  \end{equation}
The pairs of unknowns $(g_{j\ell},g_{\ell j})$ and $(g_{j'\ell'},g_{\ell',j'})$ can be eliminated simultaneously from the linear system~\eqref{matrixjljp} via Schur complements. To this end we define
\begin{equation}\label{Ajl}
  \mathcal{D}_{j\ell}=\begin{bmatrix}I & \mathcal{S}^\ell_{j\ell,\ell j}\\\mathcal{S}^j_{\ell j,j\ell} & I\end{bmatrix}
\end{equation}
whose inverse is given by
\begin{equation}\label{inv_matrix_explicit}
  \mathcal{D}_{j\ell}^{-1}=\begin{bmatrix}
I+\mathcal{S}^\ell_{j\ell,\ell j}(I-\mathcal{S}^\ell_{j\ell,\ell j}\mathcal{S}^j_{\ell j,j\ell})^{-1}\mathcal{S}^j_{\ell j,j \ell} & -\mathcal{S}^\ell_{j\ell,\ell j}(I-\mathcal{S}^\ell_{j\ell,\ell j}\mathcal{S}^j_{\ell j,j \ell})^{-1}\\
-(I-\mathcal{S}^\ell_{j\ell,\ell j}\mathcal{S}^j_{\ell j ,j \ell})^{-1}\mathcal{S}^j_{\ell j,j \ell} &
(I-\mathcal{S}^\ell_{j\ell,\ell j}\mathcal{S}^j_{\ell j,j \ell})^{-1}
\end{bmatrix}
\end{equation}
under the assumption that the operators $I-\mathcal{S}^\ell_{j\ell,\ell j}\mathcal{S}^j_{\ell j,j\ell}$ are invertible; similar considerations apply to the matrix counterpart $\mathcal{D}_{j'\ell'}$. Using the Schur complement of the matrix
\[
\begin{bmatrix} \mathcal{D}_{j\ell} & 0 \\ 0 &  \mathcal{D}_{j'\ell'}\end{bmatrix}
\]
in equations~\eqref{matrixjljp} we obtain
\begin{equation}\label{Schur1}
  \begin{bmatrix}I & \mathcal{S}^2_{j\ell j'\ell'} & \ldots \\ \mathcal{S}^1_{j\ell j'\ell'} & I & \ldots\\\ldots & \ldots& \ldots\end{bmatrix}\begin{bmatrix}g_{jj'\ell\ell'}\\g_{j'j\ell'\ell}\\\ldots\end{bmatrix}=\begin{bmatrix}0\\0\\\ldots\end{bmatrix}
  \end{equation}
where
\[
g_{jj'\ell\ell'}:=\begin{bmatrix}g_{jj'}\\g_{\ell\ell'}\end{bmatrix}\qquad g_{j'j\ell'\ell}:=\begin{bmatrix}g_{j'j}\\g_{\ell'\ell}\end{bmatrix}
\]
with
\begin{equation}\label{merge1}
  \mathcal{S}^1_{j\ell j'\ell'}=\begin{bmatrix}\mathcal{S}^j_{j'j,jj'}&0\\0&\mathcal{S}^\ell_{\ell'\ell,\ell\ell'}\end{bmatrix}-\begin{bmatrix} \mathcal{S}^{j}_{j' j, j\ell} & 0 \\ 0 & \mathcal{S}^{\ell}_{\ell' \ell, \ell j} \end{bmatrix}\mathcal{D}_{j\ell}^{-1}\begin{bmatrix}0 & \mathcal{S}^{\ell}_{j\ell,\ell\ell'} \\ \mathcal{S}^{j}_{\ell j, jj'} & 0\end{bmatrix}
  \end{equation}
  and
  \begin{equation}\label{merge2}
    \mathcal{S}^2_{j\ell j'\ell'}=\begin{bmatrix}\mathcal{S}^{j'}_{jj',j'j}&0\\0&\mathcal{S}^{\ell'}_{\ell\ell',\ell'\ell}\end{bmatrix}-\begin{bmatrix} \mathcal{S}^{j'}_{j j', j'\ell'} & 0 \\ 0 & \mathcal{S}^{\ell'}_{\ell \ell', \ell' j'} \end{bmatrix}\mathcal{D}_{j'\ell'}^{-1}\begin{bmatrix}0 & \mathcal{S}^{\ell'}_{j'\ell',\ell'\ell} \\ \mathcal{S}^{j'}_{\ell' j', j'j} & 0\end{bmatrix}.
  \end{equation}
  The pair of unknowns $(g_{jj'\ell\ell'},g_{j'j\ell'\ell})$, in turn, can be eliminated from the linear system~\eqref{Schur1} by applying yet one more Schur complement corresponding to the submatrix in the upper left corner of the matrix in equation~\eqref{Schur1}. Remarkably, both matrices $\mathcal{S}^1_{j\ell j'\ell'}$ and $\mathcal{S}^2_{j\ell j'\ell'}$ turn out to be subdomain interface RtR maps, as we explain in Section~\ref{RtRM}. Therefore, the upper left corner submatrix that features in equations~\eqref{Schur1} is of the same type as its counterpart in equations~\eqref{matrixjljp}, and thus the Schur elimination procedure is repeated in a recursive manner to eliminate all the unknowns corresponding to Robin data on all the subdomain interfaces that are in the interior of $B_0$. In the final stage of the algorithm the interior Robin data on $\partial B_0$ is connected to the exterior Robin data on $\partial B_0$ via the reduced linear system
\begin{equation}\label{system_A_reduced}
  \mathcal{D}_{reduced}\begin{bmatrix}g_{int,0}\\g_{0, ext}\end{bmatrix}=G_{reduced}.
  \end{equation}
Using the matching of $g_{int,0}$ and $g_{0,ext}$ on $\partial B_0$ through the exterior RtR map $\mathcal{S}_0$ and the incident field $u^{inc}$, the linear system~\eqref{system_A_reduced} can be further reduced to a half-sized linear system whose unknown is the interior Robin data $g_{0,ext}$. We mention that the Schur complement elimination is carried out in practice without storing the matrix $\mathcal{D}$ in the linear system~\eqref{system_A}. It is only the reduced matrix $\mathcal{D}_{reduced}$ that is stored in practice. Once the Robin data $g_{0,ext}$ is computed, backward substitution delivers all the interface Robin data $g_{j\ell}$. In order to compute the solution $u^s$ of the multiple scattering problem~\eqref{scatt}, we use for each subdomain operators that map the corresponding Robin subdomain data to Dirichlet and/or Neumann boundary data on the scatterers. The latter operators can be computed as byproducts of computations of RtR subdomain maps $\mathcal{S}^j$ with modest additional computational costs---see Section~\ref{hierch}. 

We explain next an equivalent interpretation of the Schur complement elimination algorithm in terms of subdomain RtR map merging. In particular, the merging procedure will clarify the nature of the matrices in equations~\eqref{merge1} and~\eqref{merge2}.
  
  \subsection{Subdomain RtR map merging}\label{RtRM}
  
  We explain next in more detail the equivalence between (a) the Schur complement elimination of the unknowns $g_{j\ell}$ and $g_{\ell j}$ from the linear system~\eqref{system_A} and (b) an algebraic merging of the RtR maps $\mathcal{S}^j$ and $\mathcal{S}^{\ell}$ of the two adjacent subdomains $B_j$ and $B_\ell$ that delivers the RtR map of the box $B_{j\cup\ell}:=B_j\cup B_{\ell}$ containing in its interior the union of scatterers from $B_j$ and $B_{\ell}$. To this end, we start by defining the counterpart of the splitting in equations~\eqref{split1} for the subdomain $B_\ell$:
  \begin{equation}\label{split2}
\begin{bmatrix}\mathcal{S}^\ell_{j\ell,\ell j}&\mathcal{S}^\ell_{j\ell,\ell\ell'} & \mathcal{S}^{\ell}_{j\ell,\ell\hat{\ell}}\\ \mathcal{S}^\ell_{\ell'\ell,\ell j}& \mathcal{S}^\ell_{\ell'\ell,\ell\ell'} & \mathcal{S}^\ell_{\ell'\ell,\ell\hat{\ell}} \\ \mathcal{S}^\ell_{\hat{\ell}\ell,\ell j}& \mathcal{S}^\ell_{\hat{\ell}\ell,\ell\ell'} & \mathcal{S}^\ell_{\hat{\ell}\ell,\ell\hat{\ell}} \end{bmatrix}\begin{bmatrix}g_{\ell j}\\g_{\ell \ell'}\\g_{\ell\hat{\ell}}\end{bmatrix}=\begin{bmatrix}(\partial_{n_\ell}u_\ell+i\eta u_\ell)|_{\partial B_\ell\cap \partial B_j}\\(\partial_{n_\ell}u_\ell+i\eta u_\ell)|_{\partial B_\ell\cap \partial B_{\ell'}}\\(\partial_{n_\ell}u_\ell+i\eta u_\ell)|_{\partial B_{\hat{\ell}}}\end{bmatrix}
\end{equation}
  where we denoted $\partial B_{\hat{\ell}}:=\partial B_\ell\setminus(\partial B_j\cup\partial B_{\ell'})$. The equations corresponding to using the first rows of the matrices in formulas~\eqref{split1} and~\eqref{split2} amount to
  \begin{eqnarray*}
    \mathcal{S}^j_{\ell j,j\ell}g_{j\ell} +\mathcal{S}^j_{\ell j,jj'}g_{jj'}+\mathcal{S}^j_{\ell j,j\hat{j}}g_{j\hat{j}}&=&(\partial_{n_j}u_j+i\eta u_j)|_{\partial B_j\cap\partial B_\ell}\\
    \mathcal{S}^\ell_{j\ell, \ell j}g_{\ell j} +\mathcal{S}^\ell_{j\ell,\ell\ell'}g_{\ell\ell'}+\mathcal{S}^\ell_{j\ell,\ell\hat{\ell}}g_{\ell\hat{\ell}}&=&(\partial_{n_j}u_j+i\eta u_j)|_{\partial B_\ell\cap\partial B_j}.
  \end{eqnarray*}
Using the fact that $n_j=-n_\ell$ on $\partial B_j\cap\partial B_\ell$ the Robin data matching on $\partial B_j\cap\partial B_\ell$ implies that
\begin{eqnarray*}
(\partial_{n_j} u_j + i\eta u_j)|_{\partial B_j\cap\partial B_\ell}&=&-(\partial_{n_\ell} u_{\ell} -i\eta u_{\ell})|_{\partial B_j\cap\partial B_\ell}=-g_{\ell j},\nonumber\\
(\partial_{n_\ell} u_\ell +i\eta u_\ell)|_{\partial B_j\cap\partial B_\ell}&=&-(\partial_{n_j} u_j -i\eta u_j)|_{\partial B_j\cap\partial B_\ell}=-g_{j\ell}\nonumber
\end{eqnarray*}
and hence we obtain
\begin{equation*}
  \mathcal{D}_{j\ell}\begin{bmatrix}g_{j\ell}\\g_{\ell j}\end{bmatrix}=-\begin{bmatrix}0 & \mathcal{S}^\ell_{j\ell,\ell\ell'}\\\mathcal{S}^j_{\ell j,jj'}& 0\end{bmatrix}\begin{bmatrix}g_{jj'}\\g_{\ell\ell'}\end{bmatrix}-\begin{bmatrix}0 & \mathcal{S}^\ell_{j\ell,\ell\hat{\ell}}\\\mathcal{S}^j_{\ell j,j\hat{j}}& 0\end{bmatrix}\begin{bmatrix}g_{j\hat{j}}\\g_{\ell\hat{\ell}}\end{bmatrix}
  \end{equation*}
from which it follows that
\begin{equation}\label{elimination_gjl}
\begin{bmatrix}g_{j\ell}\\g_{\ell j}\end{bmatrix}=-\mathcal{D}_{j\ell}^{-1}\begin{bmatrix}0 & \mathcal{S}^\ell_{j\ell,\ell\ell'}\\\mathcal{S}^j_{\ell j,jj'}& 0\end{bmatrix}\begin{bmatrix}g_{jj'}\\g_{\ell\ell'}\end{bmatrix}-\mathcal{D}_{j\ell}^{-1}\begin{bmatrix}0 & \mathcal{S}^\ell_{j\ell,\ell\hat{\ell}}\\\mathcal{S}^j_{\ell j,j\hat{j}}& 0\end{bmatrix}\begin{bmatrix}g_{j\hat{j}}\\g_{\ell\hat{\ell}}\end{bmatrix}.
\end{equation}
Inserting the newly found formula~\eqref{elimination_gjl} in the remaining two row equations in formulas~\eqref{split1} and~\eqref{split2} results in a relation between the Robin data $g_{j\hat{j}}, g_{\ell\hat{\ell}},g_{jj'},g_{\ell\ell'}$ and the quantities $((\partial_{n_j}u_j+i\eta u_j)|_{\partial B_{\hat{j}}},(\partial_{n_\ell}u_\ell+i\eta u_\ell)|_{\partial B_{\hat{\ell}}},(\partial_{n_j}u_j+i\eta u_j)|_{\partial B_j\cap\partial B_j'},(\partial_{n_\ell}u_\ell+i\eta u_\ell)|_{\partial B_\ell\cap\partial B_{\ell'}})$ respectively. Given that subdomain RtR maps are well defined (see Section~\ref{well-posed}), the latter relationship is in effect a block decomposition of the RtR map corresponding to the subdomain $B_{j\cup\ell}$ containing in its interior the union of scatterers from $B_j$ and $B_{\ell}$. We emphasize that the RtR map corresponding to the subdomain $B_{j\cup\ell}$ was derived from merging the $B_j$ subdomain RtR map and the $B_\ell$ subdomain RtR map. We are interested in particular in deriving an explicit formula for the merged subdomain $B_{j\cup\ell}$ RtR map corresponding to the interface $(\partial B_j\cap \partial B_{j'})\cup (\partial B_\ell\cap\partial B_{\ell'})$. To that end we make use of the equations that use the second rows in formulas~\eqref{split1} and~\eqref{split2}:
\begin{eqnarray*}
    \mathcal{S}^j_{j'j,j\ell}g_{j\ell} +\mathcal{S}^j_{j'j,jj'}g_{jj'}+\mathcal{S}^j_{j'j,j\hat{j}}g_{j\hat{j}}&=&(\partial_{n_j}u_j+i\eta u_j)|_{\partial B_j\cap\partial B_{j'}}\\
    \mathcal{S}^\ell_{\ell'\ell, \ell j}g_{\ell j} +\mathcal{S}^\ell_{\ell'\ell,\ell\ell'}g_{\ell\ell'}+\mathcal{S}^\ell_{\ell'\ell,\ell\hat{\ell}}g_{\ell\hat{\ell}}&=&(\partial_{n_j}u_j+i\eta u_j)|_{\partial B_\ell\cap\partial B_{\ell'}}.
\end{eqnarray*}
We insert formula~\eqref{elimination_gjl} in the relation above and we find
\begin{eqnarray}\label{merge_jljplp}
  &&\left(\begin{bmatrix}\mathcal{S}^j_{j'j,jj'}&0\\0&\mathcal{S}^\ell_{\ell'\ell,\ell\ell'}\end{bmatrix}-\begin{bmatrix} \mathcal{S}^{j}_{j' j, j\ell} & 0 \\ 0 & \mathcal{S}^{\ell}_{\ell' \ell, \ell j} \end{bmatrix}\mathcal{D}_{j\ell}^{-1}\begin{bmatrix}0 & \mathcal{S}^{\ell}_{j\ell,\ell\ell'} \\ \mathcal{S}^{j}_{\ell j, jj'} & 0\end{bmatrix}\right)\begin{bmatrix}g_{jj'}\\g_{\ell\ell'}\end{bmatrix}\nonumber\\
    &+&\left(\begin{bmatrix}\mathcal{S}^j_{j'j,j\hat{j}}&0\\0&\mathcal{S}^\ell_{\ell'\ell,\ell\hat{\ell}}\end{bmatrix}-\begin{bmatrix} \mathcal{S}^{j}_{j' j, j\ell} & 0 \\ 0 & \mathcal{S}^{\ell}_{\ell' \ell, \ell j} \end{bmatrix}\mathcal{D}_{j\ell}^{-1}\begin{bmatrix}0 & \mathcal{S}^{\ell}_{j\ell,\ell\hat{\ell}} \\ \mathcal{S}^{j}_{\ell j, j\hat{j}} & 0\end{bmatrix}\right)\begin{bmatrix}g_{j\hat{j}}\\g_{\ell\hat{\ell}}\end{bmatrix}\\
      &=&\begin{bmatrix}(\partial_{n_j}u_j+i\eta u_j)|_{\partial B_j\cap\partial B_{j'}}\\(\partial_{n_j}u_j+i\eta u_j)|_{\partial B_\ell\cap\partial B_{\ell'}}\end{bmatrix}\nonumber.
\end{eqnarray}
Clearly, the matrix multiplying the Robin data $(g_{jj'},g_{\ell\ell'})$ in equation~\eqref{merge_jljplp} coincides with the matrix $\mathcal{S}^1_{j\ell j'\ell'}$ defined in equation~\eqref{merge1}. Furthermore, the matrix $\mathcal{S}^1_{j\ell j'\ell'}$ can be construed as the restriction on $(\partial B_j\cap \partial B_{j'})\cup (\partial B_\ell\cap\partial B_{\ell'})$ of the subdomain $B_{j\cup\ell}$ RtR map corresponding to the interface $(\partial B_j\cap \partial B_{j'})\cup (\partial B_\ell\cap\partial B_{\ell'})$. By the same token, the matrix $\mathcal{S}^2_{j\ell j'\ell'}$ defined in equation~\eqref{merge2} can be construed as the restriction on $(\partial B_j\cap \partial B_{j'})\cup (\partial B_\ell\cap\partial B_{\ell'})$ of the subdomain $B_{j'\cup\ell'}$ RtR map corresponding to the interface $(\partial B_j\cap \partial B_{j'})\cup (\partial B_\ell\cap\partial B_{\ell'})$. Thus, the application of the Schur complement of the upper left corner submatrix of the matrix featured in equation~\eqref{Schur1} can be viewed as a merging of the $B_{j\cup\ell}$ subdomain RtR map and the $B_{j'\cup\ell'}$ subdomain RtR map. 

The conclusion of the discussion above is that the Gaussian elimination/Schur complement procedure applied to the linear system~\eqref{system_A} can be recast into the equivalent framework of computing the RtR map on $\partial B_0$ corresponding to the Helmholtz equation in $B_0$ and the ensemble of scatterers $S_p, p=1,\ldots,P$ starting from subdomain RtR maps $\mathcal{S}^j$. Specifically, we define an interior RtR map in the domain $ B_0$ that takes into account the relevant boundary conditions on each boundary $\partial S_{p},\ p=1,\ldots,P$; we show in Section~\ref{well-posed} that the map $\mathcal{S}^{int}$ is well defined for all wavenumbers $k$. The latter map is defined as
\begin{equation}\label{Sint}
\mathcal{S}^{int}(\psi) := (\partial_{n_0} u+ i\eta u)|_{\partial {B_0}}
\end{equation}
where $u$ is a solution of the Helmholtz equation
\begin{eqnarray*}
\Delta u+ k^2 u &=& 0\ {\rm in}\  B_0\setminus\cup_{p=1}^P  S_{p}\quad u=0\ {\rm on}\ \partial S_{p},\ p=1,\ldots,P\\
\partial_{n_0} u -i\eta u&=&\psi\ {\rm on}\ \partial B_0.
 \end{eqnarray*}
The map $\mathcal{S}^{int}$ is computed by mergings of subdomain RtR maps $\mathcal{S}^j$ per the prescriptions above. The RtR operator merging procedure was used recently in~\cite{Gillman1} for the solution of volumetric scattering problems. At the same time we merge the operators $Y^j$ defined in equation~\eqref{eq:Yj} to compute the operator $Y^{int}$ that maps the Robin data $(\partial_{n_0} u- i\eta u)|_{\partial {B_0}}$ to the Cauchy data $(u_,\partial_nu)$ on the collection of scatterers included in $B_0$. We also define the exterior RtR map for the domain $B_0$ as
\begin{equation}\label{Sext}
  \mathcal{S}^{ext}(\varphi) := (\partial_{n_0} u_0+ i\eta u_0)|_{\partial {B_0}}
\end{equation}
where $u^0$ is the solution to the following Helmholtz equation in $\mathbb{R}^2\setminus B_0$ with Robin data $\varphi$ on $\partial B_0$:
\begin{eqnarray*}
  \Delta u_0 +k^2u_0&=&0\ {\rm in}\ \mathbb{R}^2\setminus B_0\nonumber\\
  \partial_{n_0} u_0 -i\eta u_0&=&\varphi\ {\rm on}\ \partial B_0\\
  \lim_{|r|\to\infty}r^{1/2}(\partial u_0/\partial r - iku_0)&=&0
\end{eqnarray*}
and $\partial_{n_0}$ is the normal derivative on $\partial B_0$ with respect to the unit normal exterior $n_0$ to $ B_0$. We note that the solution $u_0$ of the Robin boundary value problem described above is unique as long as $\eta<0$~\cite{KressColton} for all positive wavenumbers $k$ and data $\varphi\in L^2(\partial \Omega_0)$. The last stage of our algorithm consists of solving the reduced system~\eqref{system_A_reduced}. In the language of RtR maps, this last stage consists of using the relations
\begin{eqnarray}\label{last_merge}
  \mathcal{S}^{int}(\partial_{n_0} u^s -i\eta\ u^s)|_{\partial  B_0}+\mathcal{S}^{int}(\partial_{n_0} u^{inc} -i\eta\ u^{inc})|_{\partial  B_0}&=&(\partial_{n_0} u^s +i\eta\ u^s)|_{\partial  B_0}\nonumber\\
  &+&(\partial_{n_0} u^{inc} +i\eta\ u^{inc})|_{\partial  B_0}\\
  \mathcal{S}^{ext}(\partial_{n_0} u^s -i\eta\ u^s)|_{\partial  B_0}&=&(\partial_{n_0} u^s +i\eta\ u^s)|_{\partial  B_0}
\end{eqnarray}
to derive the following equation for the Robin data on $\partial B_0$:
\begin{equation}\label{Robin_data_box}
  (\mathcal{S}^{ext}-\mathcal{S}^{int})(\partial_{n_0} u^s -i\eta\ u^s)|_{\partial  B_0}=\mathcal{S}^{int}(\partial_{n_0} u^{inc} -i\eta\ u^{inc})|_{\partial  B_0}-(\partial_{n_0} u^s +i\eta\ u^s)|_{\partial  B_0}.
  \end{equation}
The solution $u^s$ of the scattering problem~\eqref{scatt} can be then retrieved both in the exterior of the box $ B_0$ (and hence in the far field) and the interior of the box $B_0$ from knowledge of $u^s|_{\partial B_0}$ and $\partial_n u^s|_{\partial B_0}$, which, in turn, can be computed through the following sequence: 
 \begin{enumerate}\label{merge}
  \item $(\partial_{n_0} u^s -i\eta\ u^s)|_{\partial  B_0}=(\mathcal{S}^{ext}-\mathcal{S}^{int})^{-1}\left(\mathcal{S}^{int}(\partial_{n_0} u^{inc} -i\eta\ u^{inc})|_{\partial  B_0}-(\partial_{n_0} u^{inc} +i\eta\ u^{inc})|_{\partial  B_0}\right)$
  \item  $(\partial_{n_0} u^s + i\eta\ u^s)|_{\partial  B_0}=\mathcal{S}^{ext}(\partial_{n_0} u^s - i\eta\ u^s)|_{\partial  B_0}$
  \item $u^s|_{\partial B_0} = \frac{1}{2i\eta}\left[(\partial_{n_0} u^s + i\eta\ u^s)|_{\partial  B_0} - (\partial_{n_0} u^s - i\eta\ u^s)|_{\partial  B_0}\right]$
  \item $\partial_{n_0} u^s|_{\partial B_0} = \frac{1}{2}\left[(\partial_{n_0} u^s + i\eta\ u^s)|_{\partial  B_0} + (\partial_{n_0} u^s - i\eta\ u^s)|_{\partial  B_0}\right]$.
 \end{enumerate}
 In order to carry out Step 1 in the four step program above we pursue the following approach:
 \begin{itemize}
 \item Compute each of the RtR maps for the subdomains $ B_j, j=1,\ldots,L$ via well-conditioned boundary integral equations and then use the merging procedure outlined above to compute $\mathcal{S}^{int}$. The merging procedure is performed in a hierarchical manner that optimizes the computational cost of this stage 
 \item Compute $\mathcal{S}^{ext}$ using well conditioned boundary integral equations
   \item Solve for the quantity $(\partial_{n_0} u^s -i\eta\ u^s)|_{\partial  B_0}$ from equation~\eqref{Robin_data_box}.
 \end{itemize}
 The validity of the Gaussian elimination algorithm/RtR map merging described above hinges on two important questions: (I) the fact that the RtR maps $\mathcal{S}^j$ and $\mathcal{S}^{int}$ are well defined for all real wavenumbers $k$, and (II) the validity of equations~\eqref{inv_matrix_explicit}. We start by establishing the fact that the subdomain RtR maps $\mathcal{S}^j$ are well defined for all real wavenumbers $k$.
 
\subsection{Well posedness of the subdomain Robin problems\label{well-posed}}

Before we establish the main results about the well posedness of Helmholtz equation with Robin boundary conditions we briefly review the definition of Sobolev spaces in Lipschitz domains. For any $D\subset\mathbb{R}^2$ domain with bounded Lipschitz boundary $\Gamma$,  we denote by $H^s(D)$ the classical Sobolev space of order $s$ on $D$ 
(see for example in the monographs \cite[Ch. 3]{mclean:2000} or \cite[Ch. 2]{adams:2003}). 
We consider in addition the Sobolev spaces defined on the boundary $\Gamma$,  $H^s(\Gamma)$, which are well defined for any $s\in[-1,1]$. We recall that for any $s>t$, $H^s(\Sigma)\subset H^t(\Sigma)$, $\Sigma\in\{D_1,D_2,\Gamma\}$ with compact support. Moreover, and
$\big(H^t(\Gamma)\big)'=H^{-t}(\Gamma)$ when the inner product of $H^0(\Gamma)=L^2(\Gamma)$ is used as duality product. Let $\Gamma_0\subset\Gamma$ such that $meas(\Gamma_0)>0$. For $0<s\leq 1/2$ we define by ${H}^s(\Gamma_0)$ be the space of distributions that are restrictions to $\Gamma_0$ of functions in $H^s(\Gamma)$. The space $\widetilde{H}^s(\Gamma_0)$ is defined as the closed subspace of $H^s(\Gamma_0)$
\[
\widetilde{H}^s(\Gamma_0)=\{u\in H^s(\Gamma_0):\widetilde{u}\in H^s(\Gamma)\},\ 0<s\leq 1/2
\]
where
\[\widetilde{u}:=\begin{cases}
 u, & {\rm on}\  \Gamma \\
 0,  & {\rm on}\ \Gamma\setminus\Gamma_0.
\end{cases}
\]
We define then $H^t(\Gamma_0)$ to be the dual of $\widetilde{H}^{-t}(\Gamma_0)$ for $-1/2\leq t<0$, and $\widetilde{H}^t(\Gamma_0)$ the dual of $H^{-t}(\Gamma_0)$ for $-1/2\leq t<0$.

In order to keep the notations simple, we consider the case of one closed Lipschitz scatterer $S$ inside of a box subdomain $B$  and the following Helmholtz boundary value problem
\begin{equation} 
  \label{eq:Ac_i}
\begin{aligned}
  \Delta u+k^2 u&=&0,\qquad &\mathrm{in}\  B\setminus S\\
  u&=&0,\qquad & \mathrm{on}\ \partial S\\
\partial_n u - i\eta u&=&f,\qquad &\mathrm{on}\ \partial B
\end{aligned}
\end{equation}
where the wavenumber $k$ is assumed to be positive, $f$ is data defined on $\partial B$ and $f\in H^{-1/2}(\partial B)$, and $\eta$ is assumed to have the properties $\eta\in\mathbb{R},\ \pm\eta >0$. In equations~\eqref{eq:Ac_i} the normal derivative is taken with  respect to the unit normal pointing outside of the domain $B$. The first result we establish is:
\begin{theorem}\label{tm1}
For data $f\in H^{-1/2}(\partial B)$ the equations~\eqref{eq:Ac_i} has a unique solution $u\in H^1_{\Delta}( B\setminus S):=\{U\in H^1( B\setminus S)\ :\ \Delta U \in L^2( B\setminus S)\}$.
\end{theorem}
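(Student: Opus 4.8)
The plan is to establish existence and uniqueness for the mixed Dirichlet--Robin problem~\eqref{eq:Ac_i} via a variational (weak) formulation combined with a Fredholm alternative argument. First I would recast the problem weakly: multiplying the Helmholtz equation by a test function $v$ in the space $V:=\{v\in H^1(B\setminus S): v|_{\partial S}=0\}$, integrating by parts over $B\setminus S$, and using the Robin condition $\partial_n u = i\eta u + f$ on $\partial B$ to substitute for the boundary term, I obtain the sesquilinear form
\begin{equation*}
a(u,v)=\int_{B\setminus S}\left(\nabla u\cdot\overline{\nabla v}-k^2 u\overline{v}\right)-i\eta\int_{\partial B}u\,\overline{v},
\end{equation*}
with the antilinear functional $v\mapsto \int_{\partial B}f\,\overline{v}$ on the right-hand side, the latter being well defined for $f\in H^{-1/2}(\partial B)$ by duality with the trace $v|_{\partial B}\in H^{1/2}(\partial B)$. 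The Dirichlet condition on $\partial S$ is imposed essentially through the choice of $V$, so a solution $u\in V$ of $a(u,v)=\langle f,v\rangle_{\partial B}$ for all $v\in V$ automatically satisfies the homogeneous condition on $\partial S$ and, by the converse integration by parts, recovers~\eqref{eq:Ac_i} in the sense that $\Delta u\in L^2$, placing $u$ in $H^1_\Delta(B\setminus S)$.

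Next I would split $a=a_1+a_2$ where $a_1(u,v)=\int\nabla u\cdot\overline{\nabla v}-i\eta\int_{\partial B}u\overline v$ is coercive on $V$ and $a_2(u,v)=-k^2\int u\overline v$ is a compact perturbation. Coercivity of $a_1$ is the point where the sign assumption on $\eta$ enters: taking $v=u$ gives $a_1(u,u)=\|\nabla u\|^2-i\eta\|u\|^2_{L^2(\partial B)}$, whose real part controls the gradient and whose imaginary part controls the boundary $L^2$ norm; together with a Poincaré-type inequality valid because functions in $V$ vanish on the component $\partial S$ of positive measure, this bounds $\|u\|^2_{H^1(B\setminus S)}$ from below, yielding a Gårding inequality. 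Since the embedding $V\hookrightarrow L^2(B\setminus S)$ is compact, $a_2$ is a compact perturbation, and the Fredholm alternative reduces existence to uniqueness.

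The main obstacle, and the crux of the argument, is uniqueness: I must show the homogeneous problem ($f=0$) admits only the trivial solution for \emph{every} real wavenumber $k$, without the coercivity losing to resonances. Here I would take $v=u$ in the homogeneous identity $a(u,u)=0$ and read off the imaginary part: $\mathrm{Im}\,a(u,u)=-\eta\|u\|^2_{L^2(\partial B)}=0$, so the sign condition $\pm\eta>0$ forces $u|_{\partial B}=0$, and then $\partial_n u|_{\partial B}=i\eta u|_{\partial B}+f=0$ as well. Thus $u$ has vanishing Cauchy data on $\partial B$; extending $u$ by zero outside $B$ and invoking Green's representation formula (or Holmgren/unique continuation for the Helmholtz equation), $u$ vanishes in a neighborhood of $\partial B$ and hence, by unique continuation, throughout $B\setminus S$. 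This closes uniqueness for all $k$ and, via Fredholm, completes the existence proof. I expect the delicate points to be the precise justification of the trace and integration-by-parts identities at the low Lipschitz regularity of $\partial B$ and $\partial S$ (so that the boundary integrals and the functional $\langle f,v\rangle_{\partial B}$ are meaningful in the $H^{\pm1/2}$ duality), and the correct invocation of unique continuation on the Lipschitz domain $B\setminus S$; both are standard but must be stated with care.
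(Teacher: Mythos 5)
Your proof is correct, and its uniqueness half is essentially the paper's own argument: both take the imaginary part of the identity $\int_{B\setminus S}(|\nabla u|^2-k^2|u|^2)\,dx=-i\eta\int_{\partial B}|u|^2\,ds$ to get $u|_{\partial B}=0$, hence $\partial_n u|_{\partial B}=0$, and then propagate the vanishing Cauchy data by extension by zero plus analyticity/unique continuation. (The paper is slightly more careful at this step: it localizes the zero-extension in a small ball centered at a \emph{non-corner} point of $\partial B$, so that the weak-solution claim for the extended function is made only where the boundary is smooth; your global extension across the corners of the box is exactly the delicate point you flag, and the paper's localization is the clean way to handle it.) Where you genuinely diverge is existence: the paper does not argue variationally at all, but instead defers existence to Theorem~\ref{thm2}, i.e.\ to the unique solvability of the well-conditioned system of boundary integral equations~\eqref{system_S}, proved by a Fredholm-of-index-zero argument for boundary integral operators along the lines of Theorem~\ref{thm_inv}. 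Your route --- coercivity of $a_1$ on $V=\{v\in H^1(B\setminus S):v|_{\partial S}=0\}$ via the Poincar\'e inequality for functions vanishing on $\partial S$, compactness of $V\hookrightarrow L^2(B\setminus S)$, and the Fredholm alternative --- is a standard, self-contained alternative that works directly with $f\in H^{-1/2}(\partial B)$ and yields continuous dependence without constructing any layer potentials. What it does not buy, and what the paper's BIE route does, is the explicit representation~\eqref{S_explicit} of the RtR operator in terms of well-conditioned integral operators, which the paper needs anyway for its discretization; this is why the authors route existence through the integral-equation machinery. One small correction to your write-up: coercivity of $a_1$ does not use the sign of $\eta$ at all (the real part $\|\nabla u\|^2$ together with Poincar\'e already gives $|a_1(u,u)|\geq c\|u\|_{H^1}^2$); the hypothesis $\eta\in\mathbb{R}$, $\eta\neq 0$ is needed precisely and only in the uniqueness step, where $\mathrm{Im}\,a(u,u)=0$ forces $u|_{\partial B}=0$.
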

\begin{proof}
  We settle here the issue of uniqueness. For existence results we refer to the proof of Theorem~\ref{thm2}. In order to establish uniqueness of solutions, we show that if $f=0$, then a function $u$ that satisfies equations~\eqref{eq:Ac_i} must be identically zero in the domain $ B\setminus S$. We have that
  \[
  \int_{ B\setminus S}(|\nabla u|^2 -k^2|u|^2)dx=-i\eta\int_{\partial B}|u|^2ds,
  \]
  from which it follows that $u|_{\partial B}=0$. Given that $f=0$ on $\partial B$, the last fact implies in turn that $\partial_n u|_{\partial B}=0$. Let $\mathbf{x}_0\in\partial B$ that is not a corner point, and choose $\varepsilon$ small enough so that $B_\varepsilon(\mathbf{x}_0)=\{\mathbf{y}\in \mathbb{R}^2: |\mathbf{y}-\mathbf{x_0}|<\epsilon\}$ does not contain a corner point of $\partial B$. Denote by
  \[
  v(\mathbf{x}):=\begin{cases}
 u(\mathbf{x}), & \mathbf{x}\in B_\varepsilon(\mathbf{x}_0)\cap\overline{B} \\
 0, & \mathbf{x}\in B_\varepsilon(\mathbf{x}_0)\setminus \overline{B}.
\end{cases}
  \]
  Given that both $u$ and $\partial_nu$ vanish on $B_\varepsilon(\mathbf{x}_0)\cap\partial B$, it follows that $v$ is weak solution of the Helmholtz equation in $B_\varepsilon(\mathbf{x}_0)$, and thus it is a strong solution. Since $v$ is identically zero in an open set, analyticity arguments imply that $v$ is zero everywhere, so in particular $u$ is identically zero in open set. The latter implies that $u$ is zero in $B\setminus S$.
\end{proof}

In the case when $B$ are convex domains, standard interior elliptic estimates imply that the solution $u$ of equations~\eqref{eq:Ac_i} has improved regularity in a neighborhood of $\partial B$, that is $u\in H^{2}(\partial B(\delta))$ with $\partial B(\delta)=\{\mathbf{x}\in B: dist(\mathbf{x},\partial B)<\delta\}$ for small enough $\delta$. This improved regularity implies that $u|_{\partial B}\in H^1(\partial B)$ and $\partial_n u|_{\partial B}\in L^2(\partial B)$. Thus, it makes sense to look at problems~\eqref{eq:Ac_i} with Robin data $f\in L^2(\partial B)$, so that $(\partial_n u +i\eta u)|_{\partial B}\in L^2(\partial B)$. As previously mentioned, a central role in our DDM method is played by the RtR operator $\mathcal{S}:L^2(\partial B)\to L^2(\partial B)$ defined as
\begin{equation}\label{def_RtR}
  \mathcal{S}(f):=\partial_n u +  i\eta u
\end{equation}
where $u$ is the solution of  equations~\eqref{eq:Ac_i}. The operator $\mathcal{S}$ can be easily seen to be unitary in $L^2(\partial B)$, a property that is essential in establishing the convergence of fixed point iterative DD methods~\cite{Depres}. Having proved these basic facts about the RtR maps $\mathcal{S}^j$, we investigate next the validity of the Schur complement elimination procedure.



\subsection{Merging of RtR maps theoretical considerations\label{hierch}}

 The central issue in the Schur complement/RtR merger procedure is the validity of formula~\eqref{inv_matrix_explicit}. We investigate this problem in the representatitve case of a left/right merging of RtR maps for two subdomains (boxes) arranged as in Figure~\ref{fig:domain2} so that $B_j$ is the subdomain on the left containing the scatterer $S_j$ and $B_{\ell}$ is the subdomain on the right containing the scatterer $S_\ell$. The top down merging is amenable to a similar treatment. We denote $\partial B_j\setminus\partial B_{\ell}$ by $L$, $\partial B_{\ell}\setminus \partial B_j$ by $R$, and the common edge $\partial B_j\cap \partial B_{\ell}$ by $C$. The maps $\mathcal{S}^j$ and $\mathcal{S}^{\ell}$ were expressed in block form in the following manner
\[
\mathcal{S}^{j}=\begin{bmatrix}\mathcal{S}^j_{LL}&\mathcal{S}^j_{LC}\\ \mathcal{S}^j_{CL}& \mathcal{S}^j_{CC}\end{bmatrix}\qquad \mathcal{S}^{\ell}=\begin{bmatrix}\mathcal{S}^{\ell}_{RR}&\mathcal{S}^{\ell}_{RC}\\ \mathcal{S}^{\ell}_{CR}& \mathcal{S}^{\ell}_{CC}\end{bmatrix}
\]
where the block operators are defined informally as
\begin{equation}\label{SL}
\begin{bmatrix}\mathcal{S}^j_{LL}&\mathcal{S}^j_{LC}\\ \mathcal{S}^j_{CL}& \mathcal{S}^j_{CC}\end{bmatrix}\begin{bmatrix}(\partial_{n_j} u_j -i\eta u_j)|_{L}\\(\partial_{n_j} u_j -i\eta u_j)|_{C}\end{bmatrix}=\begin{bmatrix}(\partial_{n_j} u_j + i\eta u_j)|_{L}\\(\partial_{n_j} u_j + i\eta u_j)|_{C}\end{bmatrix}
\end{equation}
and
\begin{equation}\label{SR}
\begin{bmatrix}\mathcal{S}^{\ell}_{RR}&\mathcal{S}^{\ell}_{RC}\\ \mathcal{S}^{\ell}_{CR}& \mathcal{S}^{\ell}_{CC}\end{bmatrix}\begin{bmatrix}(\partial_{n_\ell} u_{\ell} -i\eta u_{\ell})|_{R}\\(\partial_{n_\ell} u_{\ell} -i\eta u_{\ell})|_{C}\end{bmatrix}=\begin{bmatrix}(\partial_{n_\ell} u_{\ell} + i\eta u_{\ell})|_{R}\\(\partial_{n_\ell} u_{\ell} + i\eta u_{\ell})|_{C}\end{bmatrix}.
\end{equation}
A more precise definition of the block operators in equations~\eqref{SL} and~\eqref{SR} can be given by considering the partial RtR maps:
\[
\mathcal{S}^{j}_C(0,\varphi_C) := (\partial_{n_j} w_j+ i\eta w_j)|_{\partial B_j},
\]
where
\begin{eqnarray}\label{Robin_int}
  \Delta w_j+k^2 w_j&=&0\ {\rm in}\  B_j\setminus S_{j},\quad w_j=0\ {\rm on}\ \partial S_{j} \nonumber\\
  \partial_{n_j} w_j-i\eta w_j&=&\varphi_C\ {\rm on}\quad C,\\
  \partial_{n_j} w_j-i\eta w_j&=&0\ {\rm on}\quad L.\nonumber
\end{eqnarray}
In equations~\eqref{Robin_int} the data $\varphi_C$ is such that $\varphi_C\in\widetilde{H}^{-1/2}(C)$, which implies that $(0,\varphi_C)\in H^{-1/2}(\partial B_j)$. We use the restriction operators $R_C^j:H^{-1/2}(\partial B_j)\to H^{-1/2}(C)$ defined via duality pairings in the form $\langle R_{C}^j\psi,\varphi\rangle=\langle f,E_{C}^j\varphi\rangle$, where $\psi\in H^{-1/2}(\partial B_j)$, $\varphi\in \widetilde{H}^{1/2}(C)$, and $E_{C}^j:\widetilde{H}^{-1/2}(C)\to H^{1/2}(\partial B_j)$ is the extension by zero operator. Then the operators $\mathcal{S}^j_{CC}$ are simply defined as $\mathcal{S}^j_{CC}:=R_C^j\mathcal{S}^{j}_C$, so that $\mathcal{S}^j_{CC}:\widetilde{H}^{-1/2}(C)\to H^{-1/2}(C)$. The operators $\mathcal{S}^j_{LC}$ are then similarly defined.

Using the same procedure we define the operators
\[
\mathcal{S}^{j}_L(\varphi_L,0) := (\partial_{n_j} v_j+ i\eta v_j)|_{\partial B_j},
\]
where
\begin{eqnarray*}
  \Delta v_j+k^2 v_j&=&0\ {\rm in}\  B_j\setminus S_{j},\quad v_j=0\ {\rm on}\ \partial S_{j}\\
  \partial_{n_j}v_j-i\eta v_j &=&\varphi_L\ {\rm on}\quad L\\
  \partial_{n_j} v_j-i\eta v_j&=&0\ {\rm on}\quad C.
\end{eqnarray*}
Denoting by $R_L^j$ the restriction operator $R_L^j:H^{-1/2}(\partial B_j)\to H^{-1/2}(L)$, the operators $\mathcal{S}^j_{LL}$ and $\mathcal{S}^j_{CL}$ are simply defined as $\mathcal{S}^j_{LL}=R_L^j\mathcal{S}^{j}_L$ and $\mathcal{S}^j_{CL}=R_C^j\mathcal{S}^{j}_L$. The operators $\mathcal{S}^{\ell}_{RR}$, $\mathcal{S}^{\ell}_{RC}$, $\mathcal{S}^{\ell}_{CR}$, and $\mathcal{S}^{\ell}_{CC}$ are defined similarly.

\begin{figure}
\centering
\includegraphics[width=60mm]{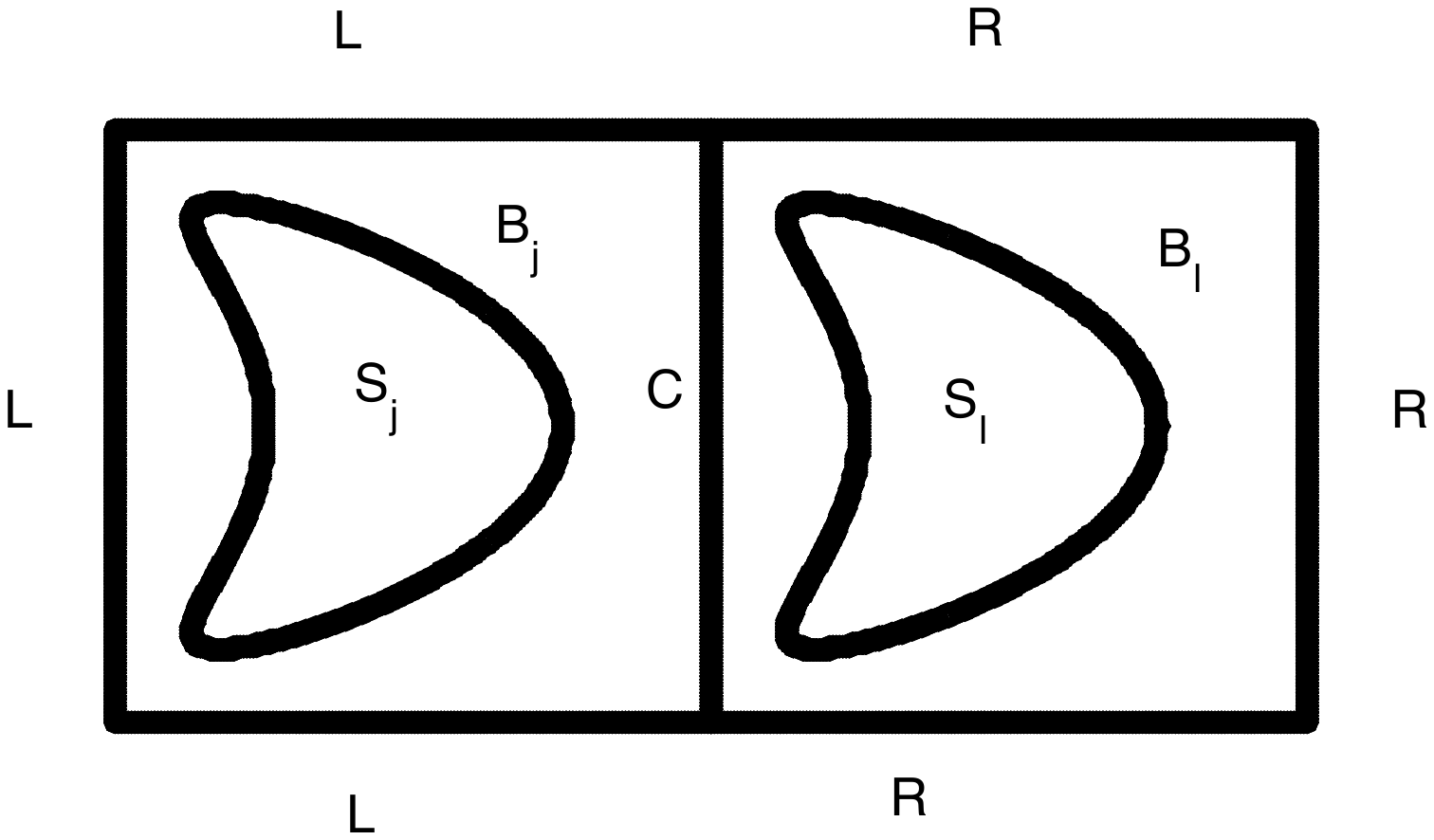}\includegraphics[width=60mm]{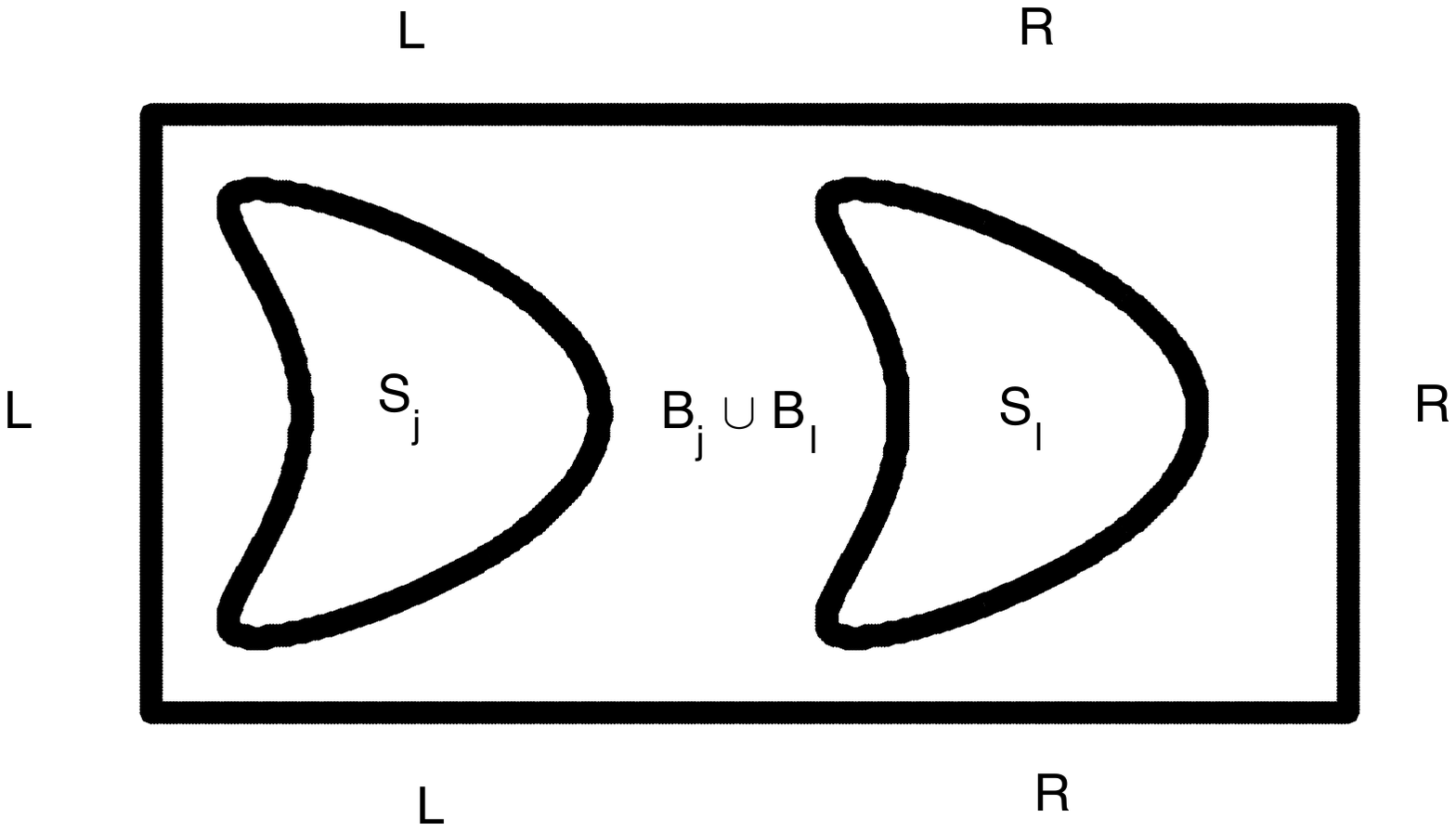}
\caption{Typical box domain.}
\label{fig:domain2}
\end{figure}

Applying the procedure of eliminating the Robin data on the common interface $C$ from equations~\eqref{SL} and~\eqref{SR} we derive the merged expression for the RtR operator for the domain $ B_j\cup B_{\ell}$ which is akin to that in formula~\eqref{merge_jljplp}:
\begin{equation}\label{eq:merged}
  \mathcal{S}^{j\cup \ell}\begin{bmatrix}(\partial_n u_{j,\ell} -i\eta u_{j,\ell})|_{L}\\(\partial_n u_{j,\ell} -i\eta u_{j,\ell})|_{R}\end{bmatrix}=\begin{bmatrix}(\partial_n u_{j,\ell} + i\eta u_{j,\ell})|_{L}\\(\partial_n u_{j,\ell} + i\eta u_{j,\ell})|_{R}\end{bmatrix}
\end{equation}
where
\[
\mathcal{S}^{c}=(\mathcal{S}^{\ell}_{CC}\mathcal{S}^j_{CC}-I)^{-1}
\]
and
\[
\mathcal{S}^{j\cup \ell}=\begin{bmatrix}\mathcal{S}^{j}_{LL}-\mathcal{S}^{j}_{LC}\mathcal{S}^{c}\mathcal{S}^{\ell}_{CC}\mathcal{S}^{j}_{CL} & \mathcal{S}^{j}_{LC}\mathcal{S}^{c} \mathcal{S}^{\ell}_{CR}\\ -\mathcal{S}^{\ell}_{RC}\mathcal{S}^{j}_{CL}+\mathcal{S}^{\ell}_{RC}\mathcal{S}^j_{CC}\mathcal{S}^{c}\mathcal{S}^{\ell}_{CC}\mathcal{S}^j_{CL} & \mathcal{S}^{\ell}_{RR}-\mathcal{S}^{\ell}_{RC}\mathcal{S}^j_{CC}\mathcal{S}^{c}\mathcal{S}^{\ell}_{CR}\end{bmatrix}.
\]
\begin{remark}
  Formulas~\eqref{eq:merged} also appear in~\cite{Gillman1}. The merging procedure above also delivers a merged map of Robin data on the boundary of $B_j\cup B_{\ell}$ to Neumann traces on the scatterers inside of $B_j\cup B_{\ell}$. Indeed, splitting the maps $Y^j[\partial B_j, \partial S_j]$ defined in equations~\eqref{Y_explicit} so that to account for the left (L) and common (C) contributions, we get
  \begin{equation}\label{mergedY}
    Y^{j\cup \ell}[\partial(B_j\cup B_{\ell}),\partial S_j]=\begin{bmatrix}Y^j_{L}-Y^j_C\mathcal{S}^c\mathcal{S}^{\ell}_{CC}\mathcal{S}^j_{CL}\\
Y^j_C\mathcal{S}^c\mathcal{S}^{\ell}_{CR}\end{bmatrix}.
  \end{equation}
  Similar equations can be derived for the merged maps $Y^{j\cup \ell}[\partial(B_j\cup B_{\ell}),\partial S_{\ell}]$. We note that the merged maps $\mathcal{S}^{j\cup \ell}$ and $Y^{j\cup \ell}$ allow us to retrieve the values of the fields $u^j$ and $u^{\ell}$ everywhere in the interior of the box $B^j\cup B^{\ell}$ from knowledge of Robin data on the boundary of $B^j\cup B^{\ell}$.
  \end{remark}
Clearly, the central issue in the merging procedure above is the invertibility of the operator $\mathcal{S}^{\ell}_{CC}\mathcal{S}^j_{CC}-I$, which we establish in Theorem~\ref{thm_inv}. We begin with a result that sheds light into the spectral properties of the operators $\mathcal{S}^j_{CC}$ and $\mathcal{S}^\ell_{CC}$:
\begin{theorem}\label{spectral_radius}
  The operators $\mathcal{S}^j_{CC}$ and $\mathcal{S}^\ell_{CC}$ can be expressed as $I+\mathcal{K}^j$ and $I+\mathcal{K}^\ell$ respectively, where $\mathcal{K}^j,\ \mathcal{K}^\ell:\widetilde{H}^{-1/2}(C)\to H^{1/2}(C)$ continuously.
\end{theorem}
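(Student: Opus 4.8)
The plan is to show that $\mathcal{K}^j := \mathcal{S}^j_{CC} - I$ is, up to the constant $2i\eta$, the Dirichlet trace on $C$ of the solution of the partial Robin problem \eqref{Robin_int}, and that this trace gains one order of Sobolev regularity relative to the incoming Robin data. First I would unwind the definitions: for $\varphi_C \in \widetilde{H}^{-1/2}(C)$ let $w_j$ be the solution of \eqref{Robin_int}, so by construction $\mathcal{S}^j_{CC}\varphi_C = R^j_C(\partial_{n_j} w_j + i\eta w_j) = (\partial_{n_j} w_j + i\eta w_j)|_C$. Since the incoming datum on $C$ is $(\partial_{n_j} w_j - i\eta w_j)|_C = \varphi_C$, subtracting the two Robin combinations gives
\[
\mathcal{K}^j \varphi_C = \mathcal{S}^j_{CC}\varphi_C - \varphi_C = 2i\eta\, w_j|_C .
\]
The identical computation with $w_\ell$ in place of $w_j$ yields $\mathcal{K}^\ell\varphi_C = 2i\eta\, w_\ell|_C$, so it suffices to treat the subdomain $B_j$.

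Next I would invoke well posedness together with elliptic trace theory. The datum $(0,\varphi_C)$ equals $E^j_C\varphi_C$, and because the extension-by-zero operator $E^j_C:\widetilde{H}^{-1/2}(C)\to H^{-1/2}(\partial B_j)$ is bounded, the full Robin datum lies in $H^{-1/2}(\partial B_j)$ and depends continuously on $\varphi_C$. By Theorem~\ref{tm1} (uniqueness) together with the existence and continuity established for problem \eqref{eq:Ac_i} (see Theorem~\ref{thm2}), the solution map $f\mapsto w_j$ is bounded from $H^{-1/2}(\partial B_j)$ into $H^1_\Delta(B_j\setminus S_j)$; here the normal trace $\partial_{n_j}w_j|_{\partial B_j}\in H^{-1/2}(\partial B_j)$ is well defined through the generalized Green identity valid for $H^1_\Delta$ functions on the Lipschitz boundary $\partial B_j$. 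Composing the bound $\|w_j\|_{H^1(B_j\setminus S_j)}\lesssim \|\varphi_C\|_{\widetilde{H}^{-1/2}(C)}$ with the Lipschitz trace theorem $H^1(B_j\setminus S_j)\to H^{1/2}(\partial B_j)$ and the bounded restriction $H^{1/2}(\partial B_j)\to H^{1/2}(C)$ shows that $w_j|_C\in H^{1/2}(C)$ with $\|w_j|_C\|_{H^{1/2}(C)}\lesssim \|\varphi_C\|_{\widetilde{H}^{-1/2}(C)}$, which is precisely the asserted continuity $\mathcal{K}^j:\widetilde{H}^{-1/2}(C)\to H^{1/2}(C)$.

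The conceptual heart of the argument is this one-derivative gain: the incoming Robin data controls only $H^{-1/2}$ (Neumann-type) regularity, whereas the quantity defining $\mathcal{K}^j$ is the Dirichlet trace of an $H^1$ function, which automatically lands in $H^{1/2}$. I would therefore expect the technical care to concentrate on (i) making the normal-derivative trace and the Robin decomposition rigorous for solutions in $H^1_\Delta$ on the Lipschitz, corner-containing box $B_j$, and (ii) verifying that the continuity of the solution operator holds in the $\widetilde{H}^{-1/2}(C)$ topology, which requires the boundedness of $E^j_C$ and the correct composition of the duality/trace identifications between $H^{\pm 1/2}(C)$ and $\widetilde{H}^{\pm 1/2}(C)$. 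Once these mapping properties are in place, the gain of regularity—and hence the compactness of $\mathcal{K}^j$ as an operator on $H^{-1/2}(C)$ that will be exploited in Theorem~\ref{thm_inv}—follows from the compact embedding $H^{1/2}(C)\hookrightarrow H^{-1/2}(C)$.
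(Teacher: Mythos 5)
Your proposal is correct and follows essentially the same route as the paper: both rest on the identity $\mathcal{K}^j\varphi_C=\mathcal{S}^j_{CC}\varphi_C-\varphi_C=2i\eta\,w_j|_C$ obtained by subtracting the incoming from the outgoing Robin combination, and both conclude by observing that $w_j\in H^1(B_j\setminus S_j)$ forces its Dirichlet trace into $H^{1/2}(\partial B_j)$ and hence its restriction into $H^{1/2}(C)$. The only difference is that you spell out the continuity chain (boundedness of $E^j_C$, of the Robin solution operator, and of the trace and restriction maps) that the paper's two-line proof leaves implicit.
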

\begin{proof} Clearly we have that
  \[
  \mathcal{S}^j_{CC}\varphi_C=\varphi_C+2i\eta\ w_j\qquad {\rm on}\quad C
  \]
  where $w_j$ is the solution of equations~\eqref{Robin_int}. Since $w_j\in H^1(B\setminus S)$ it follows that $w_j\in H^{1/2}(\partial B)$ and hence $R_C^jw_j\in H^{1/2}(C)$. 
\end{proof}

An immediate consequence of the result established in Theorem~\ref{spectral_radius} is that the operator $\mathcal{S}_{CC}^\ell\mathcal{S}_{CC}^j-I:\widetilde{H}^{-1/2}(C)\to H^{1/2}(C)$ continuously. In order to establish the invertibility of this operator we make use of the four boundary integral operators associated with the Calderon calculus for a Lipschitz domain. Let $D$ be a bounded domain in $\mathbb{R}^2$ whose boundary $\Gamma$ is a closed Lipschitz curve. Given a wavenumber $k$ such that $\Re{k}>0$ and $\Im{k}\geq 0$, and a density $\varphi$ defined on $\Gamma$, we define the single layer potential as
$$[SL_{\Gamma,k}(\varphi)](\mathbf{z}):=\int_\Gamma G_k(\mathbf{z}-\mathbf{y})\varphi(\mathbf{y})ds(\mathbf{y}),\ \mathbf{z}\in\mathbb{R}^2\setminus\Gamma$$
and the double layer potential as
$$[DL_{\Gamma,k}(\varphi)](\mathbf{z}):=\int_\Gamma \frac{\partial G_k(\mathbf{z}-\mathbf{y})}{\partial\mathbf{n}(\mathbf{y})}\varphi(\mathbf{y})ds(\mathbf{y}),\ \mathbf{z}\in\mathbb{R}^2\setminus\Gamma$$
where $G_k(\mathbf{x})=\frac{i}{4}H_0^{(1)}(k|\mathbf{x}|)$ represents the two-dimensional Green's function of the Helmholtz equation with wavenumber $k$. Applying Dirichlet and Neumann exterior and interior traces on $\Gamma$ (denoted by $\gamma_\Gamma^{D,1}$ and $\gamma_\Gamma^{D,2}$ and respectively $\gamma_\Gamma^{N,1}$ and $\gamma_\Gamma^{N,2}$) to the single and double layer potentials corresponding to the wavenumber $k$ and a density $\varphi$ we define the four Helmholtz boundary integral operators
\begin{eqnarray}\label{traces}
\gamma_\Gamma^{D,1} SL_{\Gamma,k}(\varphi)&=&\gamma_\Gamma^{D,2} SL_{\Gamma,k}(\varphi)=S_{\Gamma,k}\varphi,\quad \gamma_\Gamma^{N,1} DL_{\Gamma,k}(\varphi)=\gamma_\Gamma^{N,2} DL_k(\varphi)=N_{\Gamma,k}\varphi\nonumber\\
\gamma_\Gamma^{N,j} SL_{\Gamma,k}(\varphi)&=&(-1)^j\frac{\varphi}{2}+K_{\Gamma,k}^\top \varphi\quad j=1,2,\quad
\gamma_\Gamma^{D,j} DL_{\Gamma,k}(\varphi)=(-1)^{j}\frac{\varphi}{2}+K_{\Gamma,k}\varphi\quad j=1,2.\nonumber
\end{eqnarray}

\subsubsection{Invertibility of the operator $\mathcal{S}_{CC}^\ell\mathcal{S}_{CC}^j-I$}\label{inv}
We are now ready to prove the main theoretical result that guarantees that the Schur complement procedure does not break down:
\begin{theorem}\label{thm_inv}
  The operator $\mathcal{S}_{CC}^\ell\mathcal{S}_{CC}^j-I:\widetilde{H}^{-1/2}(C)\to H^{1/2}(C)$ is injective and onto, and thus its inverse is continuous.
\end{theorem}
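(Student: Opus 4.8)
The plan is to prove that $A:=\mathcal{S}^\ell_{CC}\mathcal{S}^j_{CC}-I$ is a Fredholm operator of index zero from $\widetilde{H}^{-1/2}(C)$ into $H^{1/2}(C)$ and that it is injective; bijectivity then follows from the Fredholm alternative and the continuity of $A^{-1}$ from the bounded inverse theorem. Throughout I would use the decomposition $\mathcal{S}^j_{CC}=I+\mathcal{K}^j$, $\mathcal{S}^\ell_{CC}=I+\mathcal{K}^\ell$ furnished by Theorem~\ref{spectral_radius}, which yields
\[
A=\mathcal{K}^j+\mathcal{K}^\ell+\mathcal{K}^\ell\mathcal{K}^j ,
\]
together with the fact (also from Theorem~\ref{spectral_radius}) that each of $\mathcal{K}^j,\mathcal{K}^\ell$ maps $\widetilde{H}^{-1/2}(C)$ continuously into $H^{1/2}(C)$. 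Since the embedding $H^{1/2}(C)\hookrightarrow\widetilde{H}^{-1/2}(C)$ is compact, feeding the output of $\mathcal{K}^j$ back into $\mathcal{K}^\ell$ factors through this compact embedding, so the product $\mathcal{K}^\ell\mathcal{K}^j$ is already \emph{compact} from $\widetilde{H}^{-1/2}(C)$ into $H^{1/2}(C)$; this disposes of the quadratic term at the outset.

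First I would establish injectivity by the gluing argument underlying the RtR-merging interpretation. Given $\varphi\in\widetilde{H}^{-1/2}(C)$ with $A\varphi=0$, let $w_j$ solve the partial-RtR problem~\eqref{Robin_int} with Robin data $\varphi$ on $C$ and $0$ on $L$, set $\psi:=\mathcal{S}^j_{CC}\varphi$, and let $w_\ell$ solve the analogous $B_\ell$ problem with data $\psi$ on $C$ and $0$ on $R$. Rewriting the incoming/outgoing Robin relations on $C$ in terms of the traces $w_j|_C,\,\partial_{n_j}w_j|_C,\,w_\ell|_C,\,\partial_{n_\ell}w_\ell|_C$ and imposing $\mathcal{S}^\ell_{CC}\psi=\varphi$, a short computation using $\eta\neq0$ forces $w_\ell|_C=-w_j|_C$ and $\partial_{n_\ell}w_\ell|_C=\partial_{n_j}w_j|_C$. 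Consequently $W:=w_j$ in $B_j$ and $W:=-w_\ell$ in $B_\ell$ has matching Dirichlet and conormal traces across $C$ (recall $n_j=-n_\ell$), hence is a weak Helmholtz solution in the merged box $B_{j\cup\ell}$ minus the scatterers, vanishing on $\partial S_j\cup\partial S_\ell$ and carrying zero Robin data on $\partial B_{j\cup\ell}=L\cup R$. The uniqueness statement of Theorem~\ref{tm1}, whose energy-identity-plus-analytic-continuation proof applies verbatim to the merged box with two Dirichlet scatterers, gives $W\equiv0$, whence $\varphi=(\partial_{n_j}w_j-i\eta w_j)|_C=0$.

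It then remains to show that $A$ is Fredholm of index zero into $H^{1/2}(C)$, and since $\mathcal{K}^\ell\mathcal{K}^j$ is compact this reduces to proving that the principal part $\mathcal{K}^j+\mathcal{K}^\ell$ is a compact perturbation of an isomorphism $\widetilde{H}^{-1/2}(C)\to H^{1/2}(C)$. Here I would invoke the Calderón calculus: representing $w_j,w_\ell$ by single- and double-layer potentials on $\partial B_j,\partial S_j$ (resp.\ $\partial B_\ell,\partial S_\ell$) and restricting to $C$, each operator $\mathcal{K}^j\varphi=2i\eta\,w_j|_C$ and $\mathcal{K}^\ell$ is an order $-1$ operator on $C$ whose leading part is a nonzero multiple of the single layer operator $S_{C,k}:\widetilde{H}^{-1/2}(C)\to H^{1/2}(C)$, which is an isomorphism on the open arc. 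The key non-degeneracy is that the leading parts of $\mathcal{K}^j$ and $\mathcal{K}^\ell$ \emph{reinforce} rather than cancel, so $\mathcal{K}^j+\mathcal{K}^\ell$ differs from a nonzero multiple of $S_{C,k}$ by a higher-order smoothing (hence compact) remainder. Thus $A$ equals an isomorphism plus a compact operator, so it is Fredholm of index zero; together with the injectivity above this makes $A$ bijective, and $A^{-1}:H^{1/2}(C)\to\widetilde{H}^{-1/2}(C)$ is continuous by the bounded inverse theorem.

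The hard part will be the last step: rigorously extracting the single-layer principal part of $\mathcal{K}^j+\mathcal{K}^\ell$ and verifying that the remainder is compact into $H^{1/2}(C)$, in the fractional Sobolev spaces attached to the \emph{open} arc $C$ and in the presence of the box corners and the Lipschitz scatterers. This is precisely where the mapping properties of the four Calderón operators $S_{C,k},K_{C,k},K_{C,k}^{\top},N_{C,k}$ and the regularity theory for the Robin problem in Lipschitz boxes are needed, the earlier estimates of the paper supplying the control of the corner and scatterer contributions required to separate the elliptic order $-1$ leading term from the smoothing remainder.
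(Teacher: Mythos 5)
Your injectivity argument is sound: the trace computation on $C$ (using $\eta\neq 0$ and $n_j=-n_\ell$) does force the glued field $W$ to be a weak Helmholtz solution in the merged box with homogeneous Robin data on $L\cup R$ and homogeneous Dirichlet data on the scatterers, and Theorem~\ref{tm1}, applied to the merged domain, gives $W\equiv 0$ and hence $\varphi=0$; this is the same uniqueness mechanism the paper itself relies on. The factorization $A=\mathcal{K}^j+\mathcal{K}^\ell+\mathcal{K}^\ell\mathcal{K}^j$ and the compactness of $\mathcal{K}^\ell\mathcal{K}^j$ via the compact embedding $H^{1/2}(C)\hookrightarrow\widetilde{H}^{-1/2}(C)$ are also fine. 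The genuine gap is the step you yourself flag as ``the hard part'': you never prove that $\mathcal{K}^j+\mathcal{K}^\ell$ is an isomorphism plus a compact operator from $\widetilde{H}^{-1/2}(C)$ to $H^{1/2}(C)$, and once injectivity is known this Fredholm statement is the entire content of the theorem --- injectivity alone gives nothing about surjectivity. The three assertions your plan needs --- (i) each $\mathcal{K}$ agrees with a nonzero multiple of $S_{C,k}$ modulo a remainder that is compact into $H^{1/2}(C)$, (ii) the two leading multiples do not cancel, (iii) $S_{C,k}:\widetilde{H}^{-1/2}(C)\to H^{1/2}(C)$ is an isomorphism on the open arc --- are all left unestablished, and (i) is precisely where the plan would founder: $C$ is an open arc whose endpoints are corner points of $\partial B_j$ and $\partial B_\ell$, the boxes contain Lipschitz scatterers, and there is no off-the-shelf pseudodifferential calculus in this setting that lets you extract a principal part and certify that the remainder gains enough regularity, uniformly up to the arc endpoints, to be compact into $H^{1/2}(C)$. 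The corner singularities of the Robin problem are exactly the obstruction to such a regularity gain, so this cannot be dismissed as routine bookkeeping.

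The paper sidesteps the open-arc difficulty entirely, and that maneuver is the idea your outline is missing. Given $\psi$, it rewrites $\mathcal{S}^\ell_{CC}\mathcal{S}^j_{CC}\varphi-\varphi=\psi$ as a transmission problem for the glued field $w$ whose jumps across $C$ are \emph{known} multiples of $\psi$ (equations~\eqref{jump1}--\eqref{jump2}); Green's identities in the two boxes then produce the representation~\eqref{GreenLpR}, in which $C$ enters only through layer potentials applied to the datum $\psi$, i.e., only through the right-hand side. Taking a regularized combination of traces on the \emph{closed} Lipschitz curve $\Gamma_{LR}=L\cup R$ and Burton--Miller traces on $S_{LR}$ yields the system $\mathcal{C}\bigl(\gamma^{D,2}_{\Gamma_{LR}}w,\gamma^{N,1}_{S_{LR}}w\bigr)=(\mathcal{D}_1,\mathcal{D}_2)\psi$, whose Fredholm property rests only on Calder\'on-calculus identities, smoothing of differences of boundary integral operators, and invertibility results on closed Lipschitz curves~\cite{EsFaVer:1992}, and whose injectivity is settled by a duality and potential-theoretic argument; invertibility of $\mathcal{C}$ then gives solvability for every $\psi$, i.e., surjectivity of $A$. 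So while both proofs share the gluing/uniqueness mechanism, the paper's surjectivity argument transfers all operator-theoretic work from the arc $C$ to closed curves and to the scatterer boundaries, which is what makes it rigorous in the Lipschitz/corner setting; without an equivalent device, your proposal cannot be completed as written.
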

\begin{proof}
Let $\psi\in H^{1/2}(C)$ and consider the equation
\begin{equation}\label{eq:surj}
  \mathcal{S}_{CC}^\ell\mathcal{S}_{CC}^j\varphi-\varphi=\psi\quad {\rm for}\ \varphi\in\widetilde{H}^{-1/2}(C).
\end{equation}
  Let $v_j$ be the solution of the following Helmholtz equation
  \begin{eqnarray}\label{RobinL}
  \Delta v_j+k^2 v_j&=&0\ {\rm in}\  B_j\setminus S_j, \quad v_j=0\ {\rm on}\ \partial S_j \nonumber\\
  \partial_{n_j} v_j-i\eta\ v_j&=&\varphi\ {\rm on}\quad C,\\
  \partial_{n_j} v_j-i\eta\ v_j&=0&\ {\rm on}\quad L.\nonumber
  \end{eqnarray}
  and $v_\ell$ be the solution of the following Helmholtz equation
  \begin{eqnarray}\label{RobinR}
  \Delta v_\ell+k^2 v_\ell&=&0\ {\rm in}\  B_\ell\setminus S_\ell, \quad v_\ell=0\ {\rm on}\ \partial S_\ell\nonumber\\
  \partial_{n_\ell} v_\ell-i\eta\ v_\ell&=&\partial_{n_j}v_j+i\eta\ v_j\ {\rm on}\quad C,\\
  \partial_{n_\ell} v_\ell-i\eta\ v_\ell&=0&\ {\rm on}\quad R.\nonumber
  \end{eqnarray}
  Eliminating $\varphi$ from equations~\eqref{eq:surj},~\eqref{RobinL}, and~\eqref{RobinR} we obtain
  \begin{eqnarray*}
    \partial_{n_\ell} v_\ell+i\eta\ v_\ell&=&\partial_{n_j} v_j-i\eta\ v_j+\psi\ {\rm on}\quad C,\\
    \partial_{n_\ell} v_\ell-i\eta\ v_\ell&=&\partial_{n_j} v_j+i\eta\ v_j\ {\rm on}\quad C.
  \end{eqnarray*}
  Defining $\tilde{v}_\ell:=-v_\ell$ in $ B_\ell\setminus  S_\ell$, we see that the last two equations imply that
  \begin{eqnarray}\label{jump}
    \tilde{v}_\ell&=&v_j+\frac{i}{2\eta}\psi\ {\rm on}\quad C\label{jump1}\\
    -\partial_{n_\ell}\tilde{v}_\ell&=&\partial_{n_j}v_j+\frac{1}{2}\psi\ {\rm on}\quad C.\label{jump2}
  \end{eqnarray}
  We assume in what follows that the boundary conditions on the scatterers are of Dirichlet type. General types of boundary conditions can be treated similarly. We apply Green's identities in the domain $B_j\setminus S_{j}$ and obtain
  \begin{eqnarray}\label{Green1}
    v_j(\mathbf{x})&=&i\eta[SL_{L,k}(v_j)](\mathbf{x})-[DL_{L,k}(v_j)](\mathbf{x})+[SL_{C,k}(\partial_{n_j}v_j)](\mathbf{x})-[DL_{C,n_j,k}(v_j)](\mathbf{x})\nonumber\\
    &-&[SL_{S_j,k}\partial_n v_j](\mathbf{x}),\ \mathbf{x}\in B_j\setminus S_{j}.
  \end{eqnarray}
  On the other hand, applying the Green's identities in the domain $B_\ell\setminus S_{\ell}$ for the functions $\tilde{v}_\ell$ and $G_k(\mathbf{x}-\cdot)$ with $\mathbf{x}\in B_j\setminus S_{j}$ we obtain
 \begin{eqnarray}\label{Green2}
    0&=&i\eta[SL_{R,k}(\tilde{v}_\ell)](\mathbf{x})-[DL_{R,k}(\tilde{v}_\ell)](\mathbf{x})+[SL_{C,k}(\partial_{n_\ell}\tilde{v}_\ell)](\mathbf{x})-[DL_{C,n_\ell,k}(\tilde{v}_\ell)](\mathbf{x})\nonumber\\
    &-&[SL_{S_{\ell},k}\partial_n \tilde{v}_\ell](\mathbf{x}),\ \mathbf{x}\in B_j\setminus S_{j}.
 \end{eqnarray}
 We chose to include the normals in the definition of the double layer potentials on $C$ in order to emphasize the fact that those are different (opposite one another) in formulas~\eqref{Green1} and~\eqref{Green2}. We add equations~\eqref{Green1} and~\eqref{Green2}, and we take into account the relations~\eqref{jump} to obtain
 \begin{eqnarray}\label{GreenL}
   v_j(\mathbf{x})&=&i\eta[SL_{L,k}(v_j)](\mathbf{x})+i\eta[SL_{R,k}(\tilde{v}_\ell)](\mathbf{x})-[DL_{L,k}(v_j)](\mathbf{x})-[DL_{R,k}(\tilde{v}_\ell)](\mathbf{x})\nonumber\\
   &+&\frac{i}{2\eta}[DL_{C,n,k}(\psi)](\mathbf{x})-\frac{1}{2}[SL_{C,k}(\psi)](\mathbf{x})-[SL_{S_{j},k}\partial_n v_j](\mathbf{x})-[SL_{S_{\ell},k}\partial_n \tilde{v}_\ell](\mathbf{x}),\ \mathbf{x}\in B_j\setminus S_{j}\nonumber\\
 \end{eqnarray}
 where we used one more time the notation $n=n_j=-n_\ell$. A similar relation can be derived for $\tilde{v}_\ell$ in the domain  $B_\ell\setminus S_{\ell}$. If we define
\[
w:=\left\{ \begin{array}{cc} v_j &\quad{\rm in} \quad B_j\\ \tilde{v}_\ell &\quad {\rm in }\quad B_\ell\end{array}\right.
  \]
  the formula~\eqref{GreenL} and its $B_\ell$ counterpart can be expressed as
  \begin{eqnarray}\label{GreenLpR}
   w(\mathbf{x})&=&i\eta[SL_{L\cup R,k}(w)](\mathbf{x})-[DL_{L\cup R,k}(w)](\mathbf{x})+\frac{i}{2\eta}[DL_{C,n,k}(\psi)](\mathbf{x})-\frac{1}{2}[SL_{C,k}(\psi)](\mathbf{x})\nonumber\\
    &-&[SL_{S_{j}\cup S_\ell,k}\partial_n w](\mathbf{x}),\ \mathbf{x}\in (B_j\cup B_\ell)\setminus(S_j\cup S_\ell)\setminus C.
  \end{eqnarray}
  At this stage we apply to both sides of equation~\eqref{GreenLpR} (1) first the interior Dirichlet and Neumann traces on $\Gamma_{LR}:=L\cup R$ (note that $\Gamma_{LR}$ is a Lipschitz domain); (2) we combine the Dirichlet trace with the regularizing operator $2S_{\Gamma_{LR},\kappa},\ \Im(\kappa)>0$ applied to the Neumann trace; and then (3) the Dirichlet and Neumann traces on $S_{LR}:=S_j\cup S_\ell$ and we combine the latter in the typical Burton Miller fashion. Applying the interior Dirichlet trace on $\Gamma_{LR}$ to both sides of equation~\eqref{GreenLpR} leads to the relation
  \begin{equation}\label{DirLR}
    \left(\frac{1}{2}I+K_{\Gamma_{LR},k}-i\eta S_{\Gamma_{LR},k}\right)\gamma_{\Gamma_{LR}}^{D,2}w+SL_{S_{LR},\Gamma_{LR},k}\gamma_{S_{LR}}^{N,1}w|_{S_{LR}}=\left(\frac{1}{2}SL_{C,\Gamma_{LR},k}-\frac{i}{2\eta}DL_{C,\Gamma_{LR},k}\right)\psi.
  \end{equation}
  On the other hand, applying the interior Neumann trace on $\Gamma_{LR}$ to both sides of equation~\eqref{GreenLpR} while taking into account the fact that $\partial_nw+i\eta w=0$ on $\Gamma_{LR}$ leads to the relation
  \begin{eqnarray}\label{NeuLR}
    \left(\frac{i\eta}{2}I-i\eta K^\top_{\Gamma_{LR},k}+ N_{\Gamma_{LR},k}\right)\gamma_{\Gamma_{LR}}^{D,2}w&+&\gamma_{\Gamma_{LR}}^N SL_{S_{LR},\Gamma_{LR},k}\gamma_{S_{LR}}^{N,1}w|_{S_{LR}}\nonumber\\
    &=&\left[\gamma_{\Gamma_{LR}}^{N,2}\left(\frac{1}{2}SL_{C,\Gamma_{LR},k}-\frac{i}{2\eta}DL_{C,\Gamma_{LR},k}\right)\right]\psi.
  \end{eqnarray}
  Adding up the two sides of equation~\eqref{DirLR} and the two sides of equation~\eqref{NeuLR} multiplied on the left by $-2S_{\Gamma_{LR},\kappa},\ \Im(\kappa>0),\ \Re(\kappa)>0$ we obtain the following relation
  \begin{eqnarray}\label{eq:combined1}
    \mathcal{C}_{11}\gamma_{\Gamma_{LR}}^{D,2}w+\mathcal{C}_{12}\gamma_{S_{LR}}^{N,1}w|_{S_{LR}}&=&\mathcal{D}_1\psi\\
    \mathcal{C}_{11}&=&\frac{1}{2}I+K_{\Gamma_{LR},k}-i\eta S_{\Gamma_{LR},k}-i\eta S_{\Gamma_{LR},\kappa} +2i\eta S_{\Gamma_{LR},\kappa}K^\top_{\Gamma_{LR},k}\nonumber\\
    &-& 2S_{\Gamma_{LR},\kappa}N_{\Gamma_{LR},k}\nonumber\\
    \mathcal{C}_{12}&=&SL_{S_{LR},\Gamma_{LR},k}-2S_{\Gamma_{LR},\kappa}\gamma_{\Gamma_{LR}}^{N,2} SL_{S_{LR},\Gamma_{LR},k}\nonumber\\
    \mathcal{D}_1&=&\left(\frac{1}{2}SL_{C,\Gamma_{LR},k}-\frac{i}{2\eta}DL_{C,\Gamma_{LR},k}\right)\nonumber\\
    &-&2S_{\Gamma_{LR},\kappa}\left[\gamma_{\Gamma_{LR}}^{N,2}\left(\frac{1}{2}SL_{C,\Gamma_{LR},k}-\frac{i}{2\eta}DL_{C,\Gamma_{LR},k}\right)\right]\nonumber.
  \end{eqnarray}
  On the other hand, applying the exterior Neumann trace on $S_{LR}$ to equation~\eqref{GreenLpR} and combining it with $-i\mu,\ \mu\neq 0,\ \mu\in\mathbb{R}$ multiplied by the exterior Dirichlet trace  on $S_{LR}$ applied to the same equation we obtain a second relation of the form
 \begin{eqnarray}\label{eq:combined2}
    \mathcal{C}_{21}\gamma_{\Gamma_{LR}}^{D,2}w+\mathcal{C}_{22}\gamma_{S_{LR}}^{N,1}w|_{S_{LR}}&=&\mathcal{D}_2\psi\\
    -\mathcal{C}_{21}&=&i\eta \gamma_{S_{LR}}^{N,1}SL_{\Gamma_{LR}, S_{LR},k}-\gamma_{S_{LR}}^{N,1}DL_{\Gamma_{LR}, S_{LR},k}\nonumber\\
    &+&\eta\mu SL_{\Gamma_{LR}, S_{LR},k}+i\mu DL_{\Gamma_{LR}, S_{LR},k}\nonumber\\
    \mathcal{C}_{22}&=&\frac{1}{2}I+K^\top_{S_{LR},k}-i\mu S_{S_{LR},k}\nonumber\\
    \mathcal{D}_2&=&\frac{i}{2\eta}\gamma_{S_{LR}}^{N,1}DL_{C, S_{LR},k}-\frac{1}{2}\gamma_{S_{LR}}^{N,1} S_{C,S_{LR},k}+\frac{\mu}{2\eta}DL_{C,S_{LR},k}+\frac{i\mu}{2}SL_{C,S_{LR},k}.\nonumber
  \end{eqnarray} 
The pair of equations~\eqref{eq:combined1} and~\eqref{eq:combined2} constitutes a linear system of boundary integral equations written in the form
  \begin{equation*}
    \begin{bmatrix}\mathcal{C}_{11}& \mathcal{C}_{12}\\ \mathcal{C}_{21}&\mathcal{C}_{22}\end{bmatrix}\begin{bmatrix}\gamma_{\Gamma_{LR}}^{D,2}w\\\gamma_{S_{LR}}^{N,1}w|_{S_{LR}}\end{bmatrix}=\begin{bmatrix}\mathcal{D}_1\\\mathcal{D}_2\end{bmatrix}\psi.
  \end{equation*}
  We establish the fact that the system of boundary integral equations above is Fredholm of index zero in the space $H^{1/2}(\Gamma_{LR})\times L^2(S_{LR})$. First, we express the operator $\mathcal{C}_{11}$ in the form
  \begin{eqnarray*}
    \mathcal{C}_{11}&=&\mathcal{C}_{11}+\mathcal{C}_{11}\\
    \mathcal{C}_{11}^1&:=&I+K_{\Gamma_{LR},0}-2K_{\Gamma_{LR},0}^2=2\left(\frac{1}{2}I+K_{\Gamma_{LR}, 0}\right)\left(I-K_{\Gamma_{LR}, 0}\right)\\
    \mathcal{C}_{11}^2&:=&2S_{\Gamma_{LR}, \kappa}(N_{\Gamma_{LR}, \kappa}-N_{\Gamma_{LR},k})-i\eta S_{\Gamma_{LR}, k}-i\eta S_{\Gamma_{LR}, \kappa}+2i\eta S_{\Gamma_{LR}, \kappa} K_{\Gamma_{LR}, k}^\top\\
&+&2(K_{\Gamma_{LR},0}-K_{\Gamma_{LR},\kappa})K_{\Gamma_{LR}, \kappa}+2K_{\Gamma_{LR},0}(K_{\Gamma_{LR},0}-K_{\Gamma_{LR}, \kappa})+(K_{\Gamma_{LR}, k}-K_{\Gamma_{LR}, 0}).
  \end{eqnarray*}
  Invoking classical results about the smoothing properties of differences of boundary integral operators~\cite{dominguez2015well} we obtain that the operator $\mathcal{C}_{11}^2:H^{1/2}(\Gamma_{LR})\to H^1(\Gamma_{LR})$ continuously, and thus $\mathcal{C}_{11}^2$ is compact. Also, the operator $\mathcal{C}_{11}^1$ is Fredholm of index zero in the space $H^{1/2}(\Gamma_{LR})$ since (a) the operator $\frac{1}{2}I+K_{\Gamma_{LR}, 0}$ is Fredholm of index zero~\cite{EsFaVer:1992}, (b) the operator $I-K_{\Gamma_{LR}, 0}$ is invertible~\cite{EsFaVer:1992}, and (c) the two operators commute. Thus, the operator $\mathcal{C}_{11}:H^{1/2}(\Gamma_{LR})\to H^{1/2}(\Gamma_{LR})$ is Fredholm of index zero. Similar arguments deliver the fact that the operator $\mathcal{C}_{22}:L^2(S_{LR})\to L^2(S_{LR})$ is also Fredholm of index zero (note that $S_{LR}$ is a union of disjoint Lipschitz domains).  Finally, the kernels of the diagonal operators $\mathcal{C}_{12}:L^2(S_{LR})\to H^{1/2}(\Gamma_{LR})$ and $\mathcal{C}_{21}:H^{1/2}(\Gamma_{LR})\to L^2(S_{LR})$ are smooth as $\Gamma_{LR}\cap S_{LR}=\emptyset$, and thus both those operators are compact. Thus, the matrix operator $\mathcal{C}:=\begin{bmatrix}\mathcal{C}_{11}& \mathcal{C}_{12}\\ \mathcal{C}_{21}&\mathcal{C}_{22}\end{bmatrix}$ is Fredholm of index zero in the space  $H^{1/2}(\Gamma_{LR})\times L^2(S_{LR})$. In order to establish the invertibility of this operators, it therefore suffices to prove its injectivity. The latter, in turn, is settled via duality arguments with respect to the real duality pairings in $L^2(S_{LR})$ and $L^2(\Gamma_{LR})$. The dual of the matrix operator $\mathcal{C}$ can be seen to equal $\mathcal{C}^\top=\begin{bmatrix}\mathcal{C}_{11}^\top & \mathcal{C}_{21}^\top \\ \mathcal{C}_{12}^\top & \mathcal{C}_{22}^\top\end{bmatrix}$ where
\begin{eqnarray}
 \nonumber\\
   \mathcal{C}_{21}^\top&=&-i\eta DL_{\Gamma_{LR},S_{LR},k}+\gamma_{\Gamma_{LR}}^{N,2} DL_{S_{LR},\Gamma_{LR},k}-i\mu(-i\eta SL_{\Gamma_{LR},S_{LR},k}+\gamma_{\Gamma_{LR}}^{N,2}SL_{\Gamma_{LR},S_{LR},k})\nonumber\\
   \mathcal{C}_{12}^\top&=& SL_{S_{LR}, \Gamma_{LR}, k}-2DL_{S_{LR},\Gamma_{LR},k}S_{\Gamma_{LR},\kappa}\nonumber\\
   \mathcal{C}_{22}^\top&=&\frac{1}{2}I+K_{S_{LR},k}-i\mu S_{S_{LR},k}\nonumber.
\end{eqnarray}
Let $(\varphi_0,\psi_0)\in Ker((\mathcal{C})^\top)$ and let us define
\[
v:=SL_{\Gamma_{LR},k}\varphi_0-DL_{\Gamma_{LR},k}[2S_{\Gamma_{LR},\kappa}]\varphi_0-i\mu SL_{S_{LR}, k}\psi_0+DL_{S_{LR},k}\psi_0,\qquad \mathrm{in}\ \mathbb{R}^2\setminus(\Gamma_{LR} \cup S_{LR}).
\]
We have that
\begin{eqnarray}
  \gamma_{\Gamma_{LR}}^{D,2} v &=& S_{\Gamma_{LR}, \kappa}\varphi_0 + S_{\Gamma_{LR},k}\varphi_0 - 2 K_{\Gamma_{LR},k}S_{\Gamma_{LR}, \kappa}\varphi_0\nonumber\\
  &-&i\mu SL_{S_{LR},\Gamma_{LR},k}\psi_0+DL_{S_{LR},\Gamma_{LR},k}\psi_0\nonumber\\
  \gamma_{\Gamma_{LR}}^{N,2} v &=&\frac{1}{2}\varphi_0+K_{\Gamma_{LR},k}^\top \varphi_0-2N_{\Gamma_{LR},k}S_{\Gamma_{LR},\kappa}\varphi_0\nonumber\\
  &-&i\mu \gamma_{\Gamma_{LR}}^{N,2} SL_{S_{LR}, \Gamma_{LR},k}\psi_0+\gamma_{\Gamma_{LR}}^{N,2} DL_{S_{LR}, \Gamma_{LR},k}\psi_0\nonumber.
\end{eqnarray}
The fact that $\mathcal{C}_{11}^\top\varphi_0+\mathcal{C}_{21}^\top\psi_0=0$ translates thus into 
\[
\gamma_{\Gamma_{LR}}^{N,2} v-i\eta \gamma_{\Gamma_{LR}}^{D,2} v=0.
\]
Similarly we have that
\begin{equation}
  \gamma_{S_{LR}}^{D,1} v = \frac{1}{2}\psi_0 +K_{S_{LR}, k}\psi_0-i\mu S_{S_{LR},k}\psi_0 +SL_{\Gamma_{LR},S_{LR}, k}\varphi_0-2DL_{\Gamma_{LR},S_{LR},k}S_{\Gamma_{LR},\kappa}\varphi_0.\nonumber
\end{equation}
The fact that $\mathcal{C}_{12}^\top\varphi_0+\mathcal{C}_{22}^\top\psi_0=0$ translates thus into 
\[
\gamma_{S_{LR}}^{D,1} v =0.
\]
Now $v$ is a solution of Helmholtz equation in $(B_j\cup B_\ell)\setminus (S_j\cup S_\ell)$ satisfying the impedance boundary condition $\gamma_{\Gamma_{LR}}^{N,2} v-i\eta \gamma_{\Gamma_{LR}}^{D,2} v=0$ on $\Gamma_{LR}$ and the Dirichlet boundary condition $\gamma_{S_{LR}}^{D,1} v =0$ on $S_{LR}$. According to the result in Theorem~\ref{tm1} it follows that $v$ is identically 0 in $(B_j\cup B_\ell)\setminus (S_j\cup S_\ell)$ and hence
\begin{equation}\label{bc_S}
\gamma_{S_{LR}}^{D,1} v = 0\qquad \gamma_{S_{LR}}^{N,1}v =0
\end{equation}
  and
\begin{equation}\label{bc_B}
\gamma_{\Gamma_{LR}}^{D,2} v = 0\qquad \gamma_{\Gamma_{LR}}^{N,2}v =0.
\end{equation}  
Using the jump conditions of Dirichlet and Neumann traces across $S_{LR}$ and equation~\eqref{bc_S} we obtain
\[
\gamma_{S_{LR}}^{D,2} v = -\psi_0 \qquad \gamma_{S_{LR}}^{N,2} v = i \mu\psi_0.
\]
Since $v$ is a solution of the Helmholtz equation in the domain $S_j\cup S_\ell$ we have that
\[
\int_{S_{LR}}(|\nabla v|^2 -k |v|^2)dx=-i\mu\int_{S_{LR}} |\psi_0|^2\ ds
\]
which implies that $\psi_0=0$ on $S_{LR}$. Using the jump conditions of Dirichlet and Neumann traces across $\Gamma_{LR}$ and equation~\eqref{bc_B} we obtain
\[
\gamma_{\Gamma_{LR}}^{D,1} v = -2 S_{\Gamma_{LR},\kappa}\varphi_0 \qquad \gamma_{\Gamma_{LR}}^{N,1} v = -\varphi_0.
\]
We have then
\[
\Im{\int_{\Gamma_{LR}}\gamma_{\Gamma_{LR}}^{D,1}v\  \overline{\gamma_{\Gamma_{LR}}^{N,1}v}\ ds}=2\ \Im {\int_{\Gamma_{LR}}S_{\Gamma_{LR},\kappa}\varphi_0\ \overline{\varphi_0}\ ds}
\]
Using the fact that for any closed Lipschitz curve $\Gamma$~\cite{turc2} 
\[
\Im \int_\Gamma (S_{\Gamma,\kappa}\varphi)\ \overline{\varphi}\ ds >0,\quad \varphi\neq 0
\]
when $\Re{\kappa}>0$ and $\Im{\kappa}>0$ we obtain that $v$ is a radiative solution of the Helmholtz equation in $\mathbb{R}^2\setminus (B_j\cup B_\ell)$ satisfying
\[
\Im{\int_{\Gamma_{LR}}\gamma_{\Gamma_{LR}}^{D,1}v\  \overline{\gamma_{\Gamma_{LR}}^{N,1}v}\ ds}\geq 0
\]
and hence $v=0$ in $\mathbb{R}^2\setminus (B_j\cup B_\ell)$. In particular this implies that $\gamma_{\Gamma_{LR}}^{N,1} v =0$, and thus $\varphi_0=0$ on $\Gamma_{LR}$. Consequently, the operator $\mathcal{C}^\top$ is injective, and thus the operator $\mathcal{C}$ is injective as well, which completes the proof of the theorem.
\end{proof}

\section{Representations of the RtR operators $\mathcal{S}^j$ in terms of well conditioned boundary integral operators\label{cfie}}

Our goal is to derive an explicit expression of subdomain RtR operators in terms of well-conditioned boundary integral operators. We mention that alternative robust boundary integral formulations for solutions of Helmholtz equations with Robin boundary conditions that feature in DDM were introduced in~\cite{steinbach2011stable}.

\subsection{Calculation of the RtR maps via boundary integral operators\label{RtR}}

For the sake of ease of exposition we focus on the simplified setting from Section~\ref{well-posed}, that is one scatterer $S$ surrounded by a box $B$; extensions to multiple scatterers inside $B$ is straightforward. Applying Green identities in the domain $ B\setminus S$ we get
\[
u(\mathbf{x}) = SL_{\partial B,k}\gamma_{\partial B}^N u - DL_{\partial B,k}\gamma_{\partial B}^D u-SL_{\partial S,k}\gamma_{\partial S}^N u,\qquad \mathbf{x}\in B\setminus S
\]
where $\gamma_{\partial S}^N$ denotes the Neumann trace with respect to the normal on $\partial S$ exterior to $ S$ applied to functions defined in the domain $ B\setminus S$. We replace $\gamma_{\partial B}^Nu=i\eta\gamma_{\partial B}^Du+f$ in the equation above and obtain
\begin{equation}\label{Green1}
  u(\mathbf{x}) = [i\eta\ SL_{\partial B,k}- DL_{\partial B,k}]\gamma_{\partial B}^D u-SL_{\partial S,k}\gamma_{\partial S}^N u + SL_{\partial B,k} f,\qquad \mathbf{x}\in B\setminus S.
  \end{equation}
The main idea is to apply Dirichlet and Neumann traces of equation~\eqref{Green1} on the boundaries $\partial B$ and $\partial S$ respectively, and to combine these traces in a regularized combined manner on $\partial B$ and in the classical combined manner of Burton-Miller on $\partial S$. We first apply the Dirichlet trace on $\partial B$ on both sides of equation~\eqref{Green1} and obtain
\begin{equation}\label{Dirichlet_ext}
  \frac{1}{2}\gamma_{\partial B}^D u-[i\eta S_{\partial B,k}-K_{\partial B,k}]\gamma_{\partial B}^D+SL_{\partial S,\partial B,k}\gamma_{\partial S}^Nu=S_{\partial B,k}f, 
  \end{equation}
where $SL_{\partial S,\partial B,k}\psi$ denotes the single layer potential applied to the density $\psi$ defined on $\partial S$ and evaluated on $\partial B$. Similarly, we apply the Neumann trace on $\partial B$ on both sides of equation~\eqref{Green1} and obtain
\begin{equation}\label{Neumann_ext}
  \frac{i\eta}{2}\gamma_{\partial B}^D u-[i\eta K^\top_{\partial B,k}-N_{\partial B,k}]\gamma_{\partial B}^D+\gamma_{\partial B}^NSL_{\partial S,\partial B,k}\gamma_{\partial S}^Nu=-\frac{1}{2}f+K^\top_{\partial B,k}f.
\end{equation}
We combine equation~\eqref{Dirichlet_ext} with equation~\eqref{Neumann_ext} preconditioned on the left by $-2S_{\partial B,k+i\varepsilon}$ with $\varepsilon>0$ and obtain
\begin{eqnarray}\label{reg_combined_ext}
  \mathcal{A}_{\partial B,\partial B}\gamma_{\partial B}^D u+\mathcal{A}_{\partial B,\partial S}\gamma_{\partial S}^N u&=&\left(S_{\partial B,k+i\varepsilon}-2S_{\partial B,k+i\varepsilon}K^\top_{\partial B,k}+S_{\partial B,k}\right)f\nonumber\\
  \mathcal{A}_{\partial B,\partial B}&:=&\frac{1}{2}I-2S_{\partial B,k+i\varepsilon}N_{\partial B,k}-i\eta S_{\partial B,k+i\varepsilon}-i\eta S_{\partial B,k}+K_{\partial B, k}\nonumber\\
  &+&2i\eta S_{\partial B,k+i\varepsilon}K^\top_{\partial B,k}\nonumber\\
\mathcal{A}_{\partial B,\partial S}&:=& SL_{\partial S,\partial B,k} -2S_{\partial B,k+i\varepsilon}\gamma_{\partial B}^NSL_{\partial S,\partial B,k}.
\end{eqnarray}
We now turn to applying traces of equation~\eqref{Green1} on $\partial S$ and combining them in the usual Burton-Miller manner. First we apply the Dirichlet trace on $\partial S$ to equation~\eqref{Green1} and obtain
\begin{equation}\label{Dirichlet_int}
  (-i\eta SL_{\partial B,\partial S,k}+DL_{\partial B,\partial S,k})\gamma_{\partial B}^Du+S_{\partial S,k}\gamma_{\partial S}^Nu=SL_{\partial B,\partial S,k}f
\end{equation}
where $SL_{\partial B,\partial S,k}\varphi$ denotes the single layer potential applied to the density $\varphi$ defined on $\partial B$ and evaluated on $\partial S$; the meaning of the notation for the double layer potential $DL_{\partial B,\partial S,k}$ is similar.  Applying the Neumann trace on $\partial S$ to equation~\eqref{Green1} we obtain
\begin{equation}\label{Neumann_int}
  (-i\eta \gamma_{\partial S}^N SL_{\partial B,\partial S,k}+\gamma_{\partial S}^N DL_{\partial B,\partial S,k})\gamma_{\partial B}^Du+\left(\frac{1}{2}I+K^\top_{\partial S,k}\right)\gamma_{\partial S}^N u=\gamma_{\partial S}^N SL_{\partial B,\partial S,k}f.
\end{equation}
We combine equation~\eqref{Neumann_int} and equation~\eqref{Dirichlet_int} multiplied by $-i\mu,\ \mu\neq 0$ and obtain
\begin{eqnarray}\label{CFIE_omega_0}
  \mathcal{A}_{\partial S,\partial B}\gamma_{\partial B}^D u+\mathcal{A}_{\partial S,\partial S}\gamma_{\partial S}^Nu&=&(\gamma_{\partial S}^N SL_{\partial B,\partial S,k}-i\mu SL_{\partial B,\partial S,k})f\nonumber\\
  \mathcal{A}_{\partial S,\partial B}&:=&-i\eta \gamma_{\partial S}^N SL_{\partial B,\partial S,k}+\gamma_{\partial S}^N DL_{\partial B,\partial S,k}\nonumber\\&-&i\mu(-i\eta SL_{\partial B,\partial S,k}+DL_{\partial B,\partial S,k})\nonumber\\
 \mathcal{A}_{\partial S,\partial S}&:=&\frac{1}{2}I+K^\top_{\partial S,k}-i\mu S_{\partial S,k}. 
\end{eqnarray}
\begin{remark}
  The cases of other types of boundary conditions on $\partial S$ can be treated by suitably combining Dirichlet and Neumann traces, possibly  using regularizing operators according to the prescriptions in~\cite{turc1,turc_corner_N}, so that to formulate a direct well conditioned boundary integral equation on $\partial S$. Different boundary conditions call for different types of unknowns on $\partial S$ (e.g. Neumann boundary conditions call for $\gamma_{\partial S}^D u$ as an unknown, etc.). In the case when $S$ is an open curve, only one type of traces is used, and the resulting boundary integral equations are preconditioned according to the methodology presented in~\cite{lintner}.
\end{remark}

Combining then equations~\eqref{reg_combined_ext} and~\eqref{CFIE_omega_0} we get the following system of boundary integral equations for the unknowns $\gamma_{\partial B}^Du$ and $\gamma_{\partial S}^Nu$:
\begin{equation}\label{system_S}
  \begin{bmatrix}\mathcal{A}_{\partial B,\partial B} & \mathcal{A}_{\partial B,\partial S} \\ \mathcal{A}_{\partial S,\partial B} & \mathcal{A}_{\partial S,\partial S}\end{bmatrix}\begin{bmatrix}\gamma_{\partial B}^Du \\ \gamma_{\partial S}^Nu\end{bmatrix}=\begin{bmatrix} S_{\partial B,k+i\varepsilon}-2S_{\partial B,k+i\varepsilon}K^\top_{\partial B,k}+S_{\partial B,k}\\ \gamma_{\partial S}^N SL_{\partial B,\partial S,k}-i\mu SL_{\partial B,\partial S,k}\end{bmatrix}f.
  \end{equation}
We state a central result whose proof follows along the same arguments as in the proof of Theorem~\ref{thm_inv}:
\begin{theorem}\label{thm2}
The system of equations~\eqref{system_S} has a unique solution in the space $L^2(\partial B)\times L^2(\partial S)$ for any data $f\in L^2(\partial B)$. The solution of this system of boundary integral equations depends continuously on the data $f$.
\end{theorem}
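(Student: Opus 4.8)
The plan is to follow verbatim the template of the proof of Theorem~\ref{thm_inv}: I would first show that the matrix operator
\[
\mathcal{A}=\begin{bmatrix}\mathcal{A}_{\partial B,\partial B}&\mathcal{A}_{\partial B,\partial S}\\ \mathcal{A}_{\partial S,\partial B}&\mathcal{A}_{\partial S,\partial S}\end{bmatrix}
\]
appearing in~\eqref{system_S} is Fredholm of index zero in $L^2(\partial B)\times L^2(\partial S)$, and then establish its invertibility by proving injectivity. Since $\mathcal{A}$ has index zero, it suffices to show that the kernel of its $L^2$-transpose $\mathcal{A}^\top$ (with respect to the real duality pairings on $\partial B$ and $\partial S$) is trivial, exactly as surjectivity was deduced from $\ker(\mathcal{C}^\top)=\{0\}$ in Theorem~\ref{thm_inv}.

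For the Fredholm property I would treat the diagonal blocks separately. For $\mathcal{A}_{\partial S,\partial S}=\tfrac12 I+K^\top_{\partial S,k}-i\mu S_{\partial S,k}$ I would write $K^\top_{\partial S,k}=K^\top_{\partial S,0}+(K^\top_{\partial S,k}-K^\top_{\partial S,0})$, use the smoothing of the wavenumber difference and of $S_{\partial S,k}$ to isolate a compact remainder, and invoke the Escauriaza--Fabes--Verchota result~\cite{EsFaVer:1992} that $\tfrac12 I+K^\top_{\partial S,0}$ is Fredholm of index zero on $L^2(\partial S)$. For $\mathcal{A}_{\partial B,\partial B}$ I would reproduce the decomposition used for $\mathcal{C}_{11}$: using the Calder\'on identity to rewrite $-2S_{\partial B,k+i\varepsilon}N_{\partial B,k}$ and the smoothing of differences of layer operators~\cite{dominguez2015well,turc_corner_N}, I would extract the index-zero second-kind factor $2\left(\tfrac12 I+K_{\partial B,0}\right)\left(I-K_{\partial B,0}\right)$ and absorb all single-layer terms and the regularized product into a compact operator. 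The off-diagonal blocks $\mathcal{A}_{\partial B,\partial S}$ and $\mathcal{A}_{\partial S,\partial B}$ from~\eqref{reg_combined_ext} and~\eqref{CFIE_omega_0} have real-analytic kernels because $\partial B\cap\partial S=\emptyset$, hence are compact; consequently $\mathcal{A}$ is Fredholm of index zero on $L^2(\partial B)\times L^2(\partial S)$.

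For injectivity I would compute $\mathcal{A}^\top$, take $(\varphi_0,\psi_0)\in\ker(\mathcal{A}^\top)$, and form the potential
\[
v:=SL_{\partial B,k}\varphi_0-DL_{\partial B,k}[2S_{\partial B,k+i\varepsilon}]\varphi_0-i\mu\,SL_{\partial S,k}\psi_0+DL_{\partial S,k}\psi_0
\]
in $\mathbb{R}^2\setminus(\partial B\cup\partial S)$. The two rows of $\mathcal{A}^\top(\varphi_0,\psi_0)=0$ translate, as in Theorem~\ref{thm_inv}, into the interior impedance condition $\gamma^{N,2}_{\partial B}v-i\eta\,\gamma^{D,2}_{\partial B}v=0$ on $\partial B$ and the Dirichlet condition $\gamma^{D,1}_{\partial S}v=0$ on $\partial S$, so that $v$ solves a homogeneous problem of the form~\eqref{eq:Ac_i} in $B\setminus S$; Theorem~\ref{tm1} then forces $v\equiv 0$ there, whence all interior traces on $\partial B$ and all exterior-of-$S$ traces on $\partial S$ vanish. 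Using the jump relations across $\partial S$ I would identify the interior-of-$S$ traces with $-\psi_0$ and $i\mu\psi_0$ and apply Green's identity inside $S$; taking imaginary parts yields $\mu\int_{\partial S}|\psi_0|^2=0$, so $\psi_0=0$. With $\psi_0=0$, the jump relations across $\partial B$ give $\gamma^{N,1}_{\partial B}v=-\varphi_0$ and $\gamma^{D,1}_{\partial B}v=-2S_{\partial B,k+i\varepsilon}\varphi_0$, so $v$ is a radiating solution in $\mathbb{R}^2\setminus B$; the radiation energy inequality together with the strict positivity $\Im\int_{\partial B}(S_{\partial B,k+i\varepsilon}\varphi_0)\overline{\varphi_0}\,ds>0$ for $\varphi_0\neq 0$ from~\cite{turc2} forces $\varphi_0=0$. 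Hence $\ker(\mathcal{A}^\top)=\{0\}$, $\mathcal{A}$ is invertible, and the bounded inverse theorem gives both unique solvability in $L^2(\partial B)\times L^2(\partial S)$ and continuous dependence on $f$.

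The main obstacle I anticipate is the Fredholm analysis of $\mathcal{A}_{\partial B,\partial B}$ in the $L^2$ setting: the regularized composition $S_{\partial B,k+i\varepsilon}N_{\partial B,k}$ must be reduced, modulo a compact remainder, to the Calder\'on-type second-kind factor above, which relies on the precise smoothing estimates for differences of layer operators of~\cite{dominguez2015well,turc_corner_N} on a Lipschitz boundary rather than on naive mapping properties of the hypersingular operator. A secondary technical point is the bookkeeping of sign and orientation conventions in the duality computation, so that the condition produced on $\partial B$ is precisely the impedance condition required by Theorem~\ref{tm1} and the radiation inequality carries the sign opposite to that of the complex-wavenumber single-layer positivity; once these signs are aligned, the two energy arguments close exactly as in Theorem~\ref{thm_inv}.
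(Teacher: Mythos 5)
Your proposal is correct and takes essentially the same route as the paper: the paper gives no separate argument for Theorem~\ref{thm2}, stating only that its proof ``follows along the same arguments as in the proof of Theorem~\ref{thm_inv}'', and your write-up is a faithful instantiation of exactly that template (index-zero Fredholm decomposition of the diagonal blocks via the Calder\'on identity and~\cite{EsFaVer:1992}, compactness of the off-diagonal blocks since $\partial B\cap\partial S=\emptyset$, and injectivity of the transpose via the potential $v$, Theorem~\ref{tm1}, the interior energy identity giving $\psi_0=0$, and the positivity of $\Im\int_{\partial B}(S_{\partial B,k+i\varepsilon}\varphi_0)\overline{\varphi_0}\,ds$ giving $\varphi_0=0$). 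The adaptations you flag --- working in $L^2(\partial B)$ rather than $H^{1/2}$, and the sign bookkeeping --- are exactly the points one must check, and they go through as you describe.
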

If we denote
\[
 \begin{bmatrix}\mathcal{A}_{\partial B,\partial B} & \mathcal{A}_{\partial B,\partial S} \\ \mathcal{A}_{\partial S,\partial B} & \mathcal{A}_{\partial S,\partial S}\end{bmatrix}^{-1}= \begin{bmatrix}\mathcal{S}_{\partial B,\partial B} & \mathcal{S}_{\partial B,\partial S} \\ \mathcal{S}_{\partial S,\partial B} & \mathcal{S}_{\partial S,\partial S}\end{bmatrix}
 \]
 it follows that an explicit representation of the RtR operator $\mathcal{S}$ is given in the form 
 \begin{eqnarray}\label{S_explicit}
   \mathcal{S}&=& I+2i\eta\mathcal{S}_{\partial B,\partial B}(S_{\partial B,k+i\varepsilon}-2S_{\partial B,k+i\varepsilon}K^\top_{\partial B,k}+S_{\partial B,k})\nonumber\\
     &+& 2i\eta \mathcal{S}_{\partial B,\partial S}(\gamma_{\partial S}^N SL_{\partial B,\partial S,k}-i\mu SL_{\partial B,\partial S,k}).
 \end{eqnarray}
 In addition, if we denote by $Y$ the operator that maps the Robin data $\gamma_{\partial B}^Nu-i\eta\gamma_{\partial B}^D u$ to $\gamma_{\partial S}^N u$, this operator can also be computed in explicit form
\begin{eqnarray}\label{Y_explicit}
  Y&=&\mathcal{S}_{\partial S,\partial B}(S_{\partial B,k+i\varepsilon}-2S_{\partial B,k+i\varepsilon}K^\top_{\partial B,k}+S_{\partial B,k})\nonumber\\
     &+& \mathcal{S}_{\partial S,\partial S}(\gamma_{\partial S}^N SL_{\partial B,\partial S,k}-i\mu SL_{\partial B,\partial S,k}).
 \end{eqnarray}

\subsection{Calculation of the exterior RtR operator $\mathcal{S}^{ext}$}

We apply Green's identities in $\mathbb{R}^2\setminus B$ for the scattered field
\[
u^s = -SL_{\partial B,k}\gamma_{\partial B}^{N,1}u^s+DL_{\partial B,k}\gamma_{\partial B}^{D,1}u^s
\]
where $\gamma_{\partial B}^{D,1}$ and $\gamma_{\partial B}^{N,1}$ denote the Dirichlet and respectively Neumann traces in the domain exterior to $B$. We replace $\gamma_{\partial B}^{N,1}u^s=i\eta\gamma_{\partial B}^{D,1}u^s+\varphi$ in the equation above and obtain
\begin{equation}\label{eq:repr_Ext}
  u^s(\mathbf{x})=[-i\eta SL_{\partial B,k}+DL_{\partial B,k}]\gamma_{\partial B}^{D,1}u^s-SL_{\partial B,k}\varphi,\qquad \mathbf{x}\in\mathbb{R}^2\setminus B.
  \end{equation}
Applying the exterior Dirichlet and Neumann traces on $\partial B$ to equation~\eqref{eq:repr_Ext} we obtain
\begin{eqnarray}\label{eq:traces1}
\frac{1}{2}\gamma_{\partial B}^{D,1} u^s - K_{\partial B,k}(\gamma_{\partial B}^{D,1} u^s) +i\eta S_{\partial B,k}(\gamma_{\partial B}^{D,1}u^s)&=&-S_{\partial B,k}\varphi\nonumber\\
\frac{i\eta}{2}\gamma_{\partial B}^{D,1} u^s+i\eta K_{\partial B,k}^\top\gamma_{\partial B}^{D,1} u^s-N_{\partial B,k}\gamma_{\partial B}^{D,1} u^s&=&-\frac{1}{2}\varphi-K_{\partial B,k}^\top\varphi.\nonumber
\end{eqnarray}
Following the strategy introduced in~\cite{turc_corner_N} we add the first equation above to the second equation above composed on the left with the operator $-2S_{\partial B,k+i\varepsilon},\ \varepsilon>0$ and we obtain a representation of the operator $\mathcal{S}^{ext}$ that involves well conditioned boundary integral operators
\begin{eqnarray}\label{eq:CFIER1}
\mathcal{S}^{ext}&=&I+2i\eta (\mathcal{A}_{\partial B,\partial B}^{ext})^{-1}\left(S_{\partial B,k+i\varepsilon}+2S_{\partial B,k+i\varepsilon}K^\top_{\partial B,k}-S_{\partial B,k}\right)\\
\mathcal{A}_{\partial B,\partial B}^{ext}&:=&\frac{1}{2}I-2S_{\partial B,k+i\varepsilon} N_{\partial B,k}-i\eta S_{\partial B,k+i\varepsilon} +i\eta S_{\partial B,k} - K_{\partial B, K}- 2i\eta S_{\partial B,k+i\varepsilon} K_{\partial B, k}^\top. \nonumber
\end{eqnarray}

\subsection{High-order Nystr\"om discretizations of RtR maps~\label{Nystrom}}

\parskip 10pt plus2pt minus1pt
\parindent0pt 

We use a Nystr\"om discretization of the RtR maps computed as in equations~\eqref{S_explicit} and~\eqref{eq:CFIER1} that relies on discretizations of the four boundary integral operators in the Calder\'on calculus. The latter, in turn, rely on (a) use of graded meshes based on sigmoid transforms that cluster polynomially discretization points toward corners, (b) splitting of the kernels of the parametrized versions of the boundary integral operators that feature in equations~\eqref{S_explicit} and~\eqref{eq:CFIER1} into sums of regular quantities and products of periodized logarithms and regular quantities, (c) trigonometric interpolation of the densities of the boundary integral operators, and (d) analytical expressions for the integrals of products of periodic singular and weakly singular kernels and Fourier harmonics. These discretizations were introduced in~\cite{turc2016well} where the full details of this methodology were presented. The main idea of our Nystr\"om discretization is to incorporate sigmoid transforms~\cite{KressCorner} in the parametrization of a closed Lipschitz curve $\Gamma$ and then split the kernels of the Helmholtz boundary integral operators into smooth and singular components. Using graded meshes that avoid corner points and classical singular quadratures of Kusmaul and Martensen~\cite{kusmaul,martensen}, we employ the Nystr\"om discretization presented in~\cite{turc2016well} to produce high-order approximations of the boundary integral operators that enter equations~\eqref{S_explicit} and~\eqref{eq:CFIER1}.

The Nystr\"om discretization of the Helmholtz boundary integral operators presented above delivers naturally a discretization of the RtR operators $\mathcal{S}^j$ per equations~\eqref{S_explicit}. Indeed, for each of the subdomains $B_j,\ j=1,\ldots,L$ we employ graded meshes $\mathbf{x}^j_{m},m=0,\ldots,N_j-1$ on $\partial B_j$ together with appropriate meshes on the scatterers inside the subdomain $B_j$ (say of size $M_j$) which we use to discretize all the boundary integral operators that feature in equations~\eqref{S_explicit} according to the prescriptions above.  The discretization of the RtR map corresponding to $\partial B_j$ is constructed then as a $N_j\times N_j$ collocation matrix  $\mathcal{S}^j_N$. We note that formula~\eqref{S_explicit} also features inverses of boundary integral operators, whose discretization is obtained through direct linear algebra solvers. We note that the cost of obtaining the collocation matrix $\mathcal{S}_N^j$ is $\mathcal{O}((N_j+M_j)^3)$. Thus, the subdomain decomposition of the computational domain $B_0$ has to be performed with care so that the size of subdomain discretizations $N_j+M_j$ is amenable to direct linear algebra solvers. Once the collocation matrices $\mathcal{S}^j_N$ are constructed, the discretization of the interface subdomain RtR maps (e.g. the maps $\mathcal{S}^j_{j\ell,\ell j}$ and all the other ones defined in equation~\eqref{split1}) is straightforward since it simply amounts to extracting suitable blocks from the matrices $\mathcal{S}^j_N$. Indeed, the discretization of the operators $\mathcal{S}^j_{j\ell,\ell j}$ consists of extracting from the collocation matrix $\mathcal{S}^j_N$ the block that corresponds to self-interactions of the grid points on the interface/edge $\partial B_j\cap\partial B_\ell$ (this is possible since none of these mesh points $\mathbf{x}^j_m$ corresponds to a corner of $B_j$). We also mention that all the matrix inversions needed in the Schur complement algorithm (cf. formula~\eqref{inv_matrix_explicit}) are performed by direct linear algebra methods as well.

The elimination of the interface unknowns $g_{j\ell}$ from the linear system~\eqref{system_A} via Schur complements is performed in a hierarchical fashion that optimizes the computational cost of the linear algebra manipulations. In a nutshell, and referring to the case depicted in Figure~\ref{fig:4domains}, the collocated values of the Robin data unknowns $(g_{j\ell},g_{\ell,j})$ and $(g_{j'\ell'},g_{\ell'j'})$ (and all their counterparts) are eliminated in the first stage, then the collocated values of the lumped Robin data unknowns $(g_{jj'\ell\ell'},g_{j'j\ell'\ell})$ (together with their counterparts) are eliminated in the second stage, and the procedure is repeated until all the Robin interface unknowns are eliminated. Equivalently, the RtR maps corresponding to the subdomains $B_j$ are merged hierarchically: in the first stage the discretizations of the RtR maps corresponding to the subdomains $B_j$ and $B_\ell$ as well as $B_{j'}$ and $B_{\ell'}$ (and all of their counterparts) are merged to produce discretizations of the RtR maps corresponding to the subdomains $B_{j\cup\ell}$, $B_{j'\cup\ell'}$, etc.; in the second stage the merging procedure is in turn applied to the discretizations of the RtR maps for the subdomains $B_{j\cup\ell}$ and $B_{j'\cup\ell'}$ (and all of their counterparts) to deliver discretizations of the RtR maps corresponding to the subdomains $B_{j\cup\ell\cup j'\cup\ell'}:=B_j\cup B_\ell\cup B_{j'}\cup B_{\ell'}$ (and similar other subdomains); the merging procedure is repeated until a discretization of the RtR map corresponding to the interior domain $B_0$ is calculated.

We present in the next Section numerical results obtained from our Schur complement DD algorithm. 

\section{Numerical results}

In this section we present a variety of numerical results that highlight the performance of our Schur complement DDM algorithm for solution of multiple scattering problems. All the results presented here were produced on a single core (3.7 GHz Intel Xeon processor) of a MacPro machine with 64Gb of memory by a MATLAB implementation of our algorithm. We present results for scattering from clouds of sound-soft scatterers (e.g. Dirichlet boundary conditions on the scatterers). The extension to other types of boundary conditions is straightforward. We create clouds of scatterers by choosing a large box that we subdivide into $L$ subdomains (boxes) and then we place inside each subdomain $P$ scatterers whose position is random, while ensuring that the scatterers do not intersect each other and do not intersect the boundary of the domain. In all the experiments in this section we used $\eta=k$ in the RtR maps.

Our DD algorithm proceeds in two stages: an offline (precomputation) stage whereby all the subdomain RtR maps are computed using the Nystr\"om discretization presented in Section~\ref{Nystrom}, and a stage where the linear system~\eqref{system_A} is solved via hierarchical Schur complements. Finally, in the solution stage, we solve a linear system involving a dense matrix that corresponds to connecting the unknowns on the inner/outer artificial boundary through interior and respectively exterior RtR maps. We note that although the algorithm is highly parallelizable, our current implementation does not take advantage of these possibilities.

We comment next on the computational complexity of our Schur complement elimination algorithm. Assuming a collection of $L=\ell_1\times \ell_2$ of identical square subdomains, each one containing $P_L$ scatterers inside. If $n_P$ collocation points are used per each scatterer to resolve the solution (say that these amount to 6 pts/wavelength which is typical for Nystr\"om discretizations of boundary integral equations) then we argue that about $P_L^{1/2} n_P^{1/2}$ collocation points are needed per side of each subdomain. Since there are overall $2\ell_1\ell_2+\ell_1+\ell_2$ common interfaces in the DD algorithm, the discretization of the linear system~\eqref{system_A} would require about $2(2\ell_1\ell_2+\ell_1+\ell_2)P_L^{1/2} n_P^{1/2}$ unknowns (recall that there are two unknowns per interface). However, the matrix corresponding to the linear system~\eqref{system_A}, although sparse, is never stored in practice, and the solution of that system is performed by employing hierarchical Schur complements of small size. The cost of our Schur complement elimination algorithm is thus dominated by that of the solution of the linear system~\eqref{system_A_reduced} that features a dense matrix corresponding to unknowns on the inner/outer interface, and as such the cost is $\mathcal{O}((2(\ell_1+\ell_2)P_L^{1/2} n_P^{1/2})^3)$. Thus, if we denote $N_T=\ell_1\ell_2 P_Ln_P$, the computational cost of our Schur complement solver is roughly $\mathcal{O}((4N_T)^{3/2})$. In addition, the precomputation/offline stage of our algorithm requires a computational cost of $\mathcal{O}(\ell_1\ell_2(P_Ln_P)^3)$ in order to compute the $L$ subdomain RtR maps, assuming that the distribution of scatterers inside each box is different. Nevertheless, in the important case of photonic crystal applications, the distribution of scatterers inside the subdomains is identical, in which instance the precomputation cost can be significantly reduced. The cost of computing a single subdomain RtR map can be further reduced if fast compression algorithms such as $\mathcal{H}$-matrices are used. In contrast, the cost of a direct boundary integral solver for the solution of the multiple scattering problem would be $\mathcal{O}(N_T^3)$ with a $\mathcal{O}(N_T^2)$ amount of memory needed. Consequently, in multiple scattering applications that involve very large numbers of scatterers, the direct approach is simply too costly. In case Krylov subspace iterative solvers are employed for the solution of the very large linear algebra problem resulting from the direct approach, the numbers of iterations is prohibitive. Clearly, our algorithm is competitive when the number of scatterers per subdomain (i.e. $P_L$) is large. We emphasize that our DD algorithm is a direct method, and as such multiple incidence can be treated with virtually no additional cost.

We present in Table~\ref{comp1} a comparison between the global BIE approach and our DD algorithm. The multiple scattering configuration in this experiment consists of a cloud of 640 lines segment scatterers, a configuration that is challenging to volumetric discretizations (e.g. finite differences, finite elements). More precisely, our configuration is enclosed by a square box of size 16 by 16 which is divided in a collection of $4\times 4$ subdomains, each a a square box of size 4 in which we placed  a collection of 40 line segments of length 0.4 whose centers and orientations are chosen randomly (yet avoiding self intersections and intersections with the boundary of the box). The distribution of scatterers is different in each subdomain, and thus the subdomain RtR maps are different. In all the numerical results presented, we report in the column ``Unknowns'' the total number of unknowns needed to discretize the scatterers in the cloud; in the column ``Unknowns DD'' we report the number of unknowns in the DD linear system~\eqref{system_A} and the number of unknowns in the reduced system~\eqref{system_A_reduced}. We emphasize that the matrix related to the DD system~\eqref{system_A} is not stored, it is only the matrix in the reduced system~\eqref{system_A_reduced} obtained after applications of the Schur complements that is stored. Our DD algorithm uses $4\times 4$ subdomains. We chose a wavenumber $k=8$ such that the scattering ensemble has size $20\lambda \times 20\lambda$ and we compared the far-field results produced by each method, and we observe excellent agreement. As it can be seen from the results in Table~\ref{comp1}, our solver is more competitive than the solver based on the global first kind boundary integral equation (BIE) formulation of the multiple scattering problem, even when accounting for the offline cost. We present in Figure~\ref{fig:cloud} a depiction of the total field in a neighborhood of the scatterer cloud.

\begin{table}
\begin{center}
\resizebox{!}{0.8cm}
{
\begin{tabular}{|c|c|c|c|c|c|c|c|c|c|c|c|c|}
\hline
Unknowns & BIE solver & Unknowns DD & Offline & \multicolumn{3}{c|}{DDM} & Error far-field\\
\cline{5-7}
 & & & & Hierarchical elimination & Solution & Total time\\
\hline
5,120 & 13.24 & 2,560/512 & 2.00 & 0.12 & 0.22 & 0.34 & 1.0 $\times$ $10^{-1}$\\
10,240 & 66.56 & 3,840/768 & 5.90 & 0.18 & 0.47 & 1.05 & 3.1 $\times$ $10^{-3}$\\
20,480 & 419.74 & 5,760/1,152 & 21.61 & 0.42 & 1.10 & 1.52 & 9.9 $\times$ $10^{-5}$\\
\hline
\end{tabular}
}
\caption{Comparison of our Schur complement DD solver with the BIE direct solver for a $20\lambda\times20\lambda$ cloud of 640 line segment scatterers, each scatterer being of size $0.4\lambda$. The DD uses 16 subdomains with 40 scatterers in each subdomain.  The offline time refers to the time needed to compute the four subdomain RtR maps, which were computed sequentially. Solution time is the time it takes to solve the final linear system after the unknowns on the interior interfaces have been eliminated. All times are in seconds. The BIE iterative solver required 912 iterations to reach a GMRES tolerance of $10^{-6}$ in the case of the coarsest discretization. Given that the BIE formulation is of the first kind, the numbers of GMRES iterations grow with the size of the discretization. \label{comp1}}
\end{center}
\end{table}

\begin{figure}
\centering
\includegraphics[width=80mm]{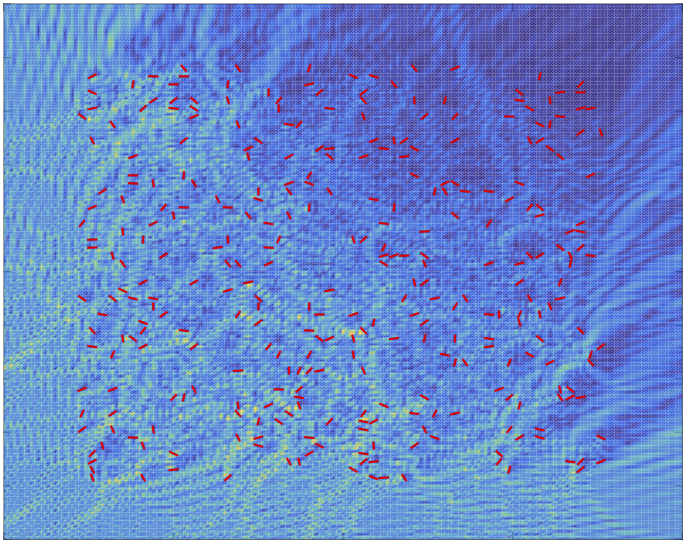}
\caption{Total field scattered by a cloud of 640 line segment scatterers as an incident plane wave making an angle of $45^\circ$ with the vertical impinges on the cloud.}
\label{fig:cloud}
\end{figure}

We present in Table~\ref{comp2} an illustration of the performance of our algorithm for large clouds of scatterers (e.g. made up of 10,240 and respectively 40,960 scatterers) that span domains of size $80\lambda\times 80\lambda$ and respectively $160\lambda \times 160\lambda$, each scatterer being of size $0.4\lambda$. Again, the arrangement of scatterers in the subdomains was produced in the same manner as in Table~\ref{comp1} (there are $16\times 16$ and respectively $32\times 32$ subdomains), and the distribution of scatterers is different in each subdomain. These configurations could model rain drops or possibly foliage. Given the large size of the cloud, global BIE based methods are beyond the limits of the computational resources we used in these experiments.  The number of collocation points used for the discretization of the RtR maps was chosen to be $\mathcal{O}(N_T^{1/2})$ where $N_T$ is the number of discretization points needed on the scatterers.  We present in Figure~\ref{fig:RCS} Radar Cross Section (RCS) plots (in dB) for (a) the configuration used in Table~\ref{comp2} and (b) for the same geometric arrangement but doubling the frequency (this makes the cloud of scatterers to span a domain of size $160\lambda\times 1600\lambda$) when a plane wave whose direction is making a $45^\circ$ angle with the vertical impinges on the ensemble of scatterers.

\begin{table}
\begin{center}
\resizebox{!}{1.0cm}
{
\begin{tabular}{|c|c|c|c|c|c|c|c|c|c|c|c|c|}
\hline
Size & Unknowns& Unknowns DDM & Offline & \multicolumn{3}{c|}{DDM} & Error far-field\\
\cline{5-8}
 & & & & Hierarchical elimination & Solution & Total time\\
\hline
10,240/80$\lambda$ $\times$ 80$\lambda$ & 81,920 & 34,816/2,048 & 26.6 & 3.2 & 4.1 & 7.3 & 7.3 $\times$ $10^{-1}$\\
10,240/80$\lambda$ $\times$ 80$\lambda$ & 163,840 & 52,224/3,072 & 88.9 & 8.9 & 10.6 & 19.5 & 6.5 $\times$ $10^{-2}$\\
10,240/80$\lambda$ $\times$ 80$\lambda$ & 327,680 & 78,336/4,608 & 337.1 & 25.4 & 30.4 & 55.8 & 6.9 $\times$ $10^{-3}$\\
10,240/80$\lambda$ $\times$ 80$\lambda$ & 655,360 & 117,504/6,912 & 1,388 & 79.7 & 85.1 & 164.8 & 4.2 $\times$ $10^{-6}$\\
\hline
40,960/160$\lambda$ $\times$ 160$\lambda$ & 1,310,720 & 304,128/6,144 & 1,473 & 208.1 & 197.8 & 405.9 & 6.4 $\times$ $10^{-3}$\\
\hline
\end{tabular}
}
\caption{Performance of our Schur complement DD solver for the multiple scattering off of (1) a cloud of 10,240 line segment scatterers each of size $0.4\lambda$ spanning $80\lambda\times 80\lambda$; and (2) a cloud of 40,960 line segment scatterers each of size $0.4\lambda$ spanning $160\lambda\times 1600\lambda$ . The DD algorithm uses $16\times 16$ and respectively $32\times 32$ subdomains with 40 scatterers in each subdomain. All times are in seconds. The reference solutions were obtained using $655,360$ unknowns to discretize the scatterers and $139,264$ DD unknowns in the case (1) and $2,621,440$ unknowns to discretize the scatterers and $456,192$ DD unknowns in case (2). The computational times to produce the reference solution in case (2) was 5,372 sec for the offline computations, and 1,119 sec for the DD algorithm.\label{comp2}}
\end{center}
\end{table}
%
%

\begin{figure}
\centering
\includegraphics[width=55mm]{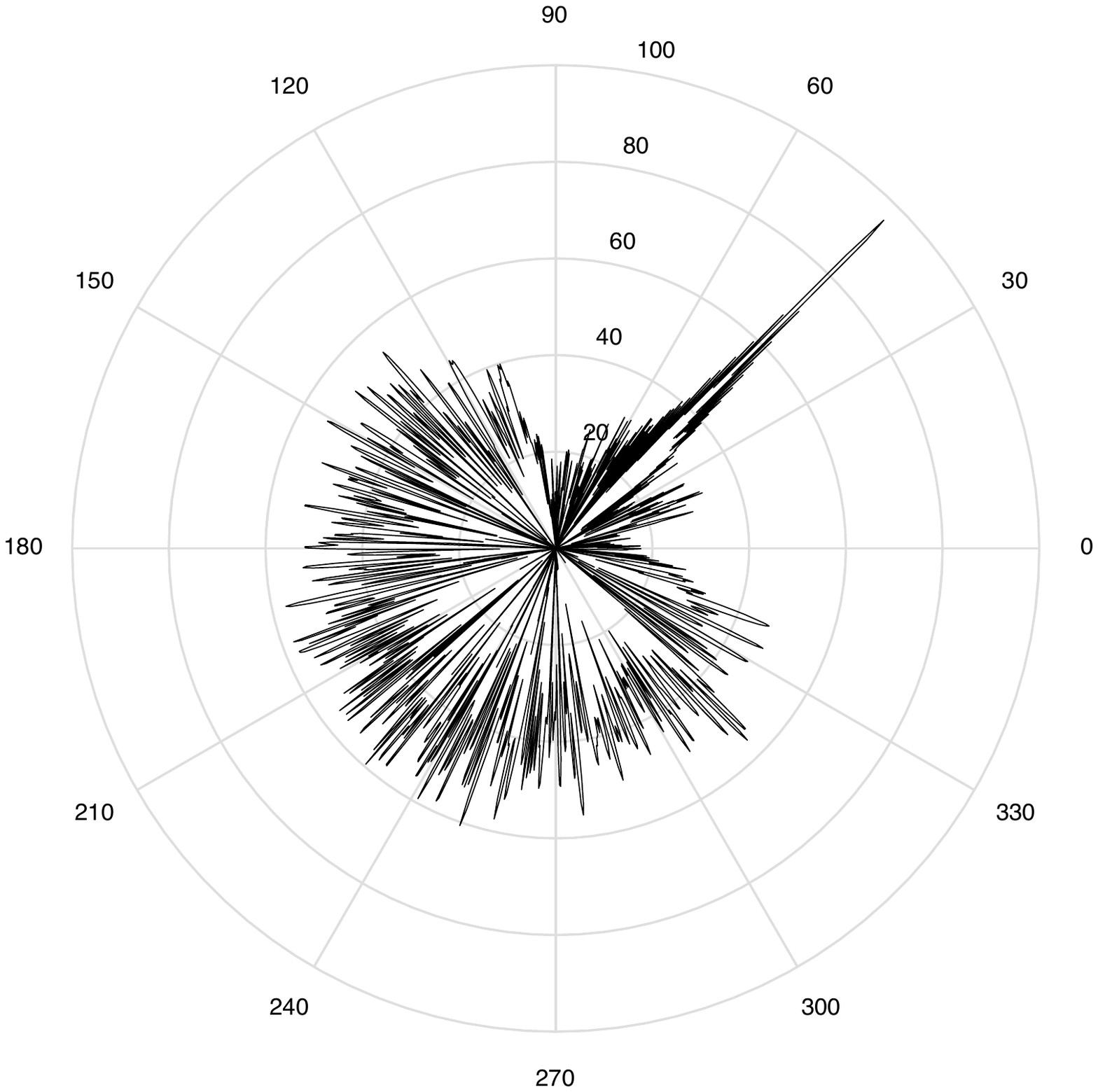}\includegraphics[width=55mm]{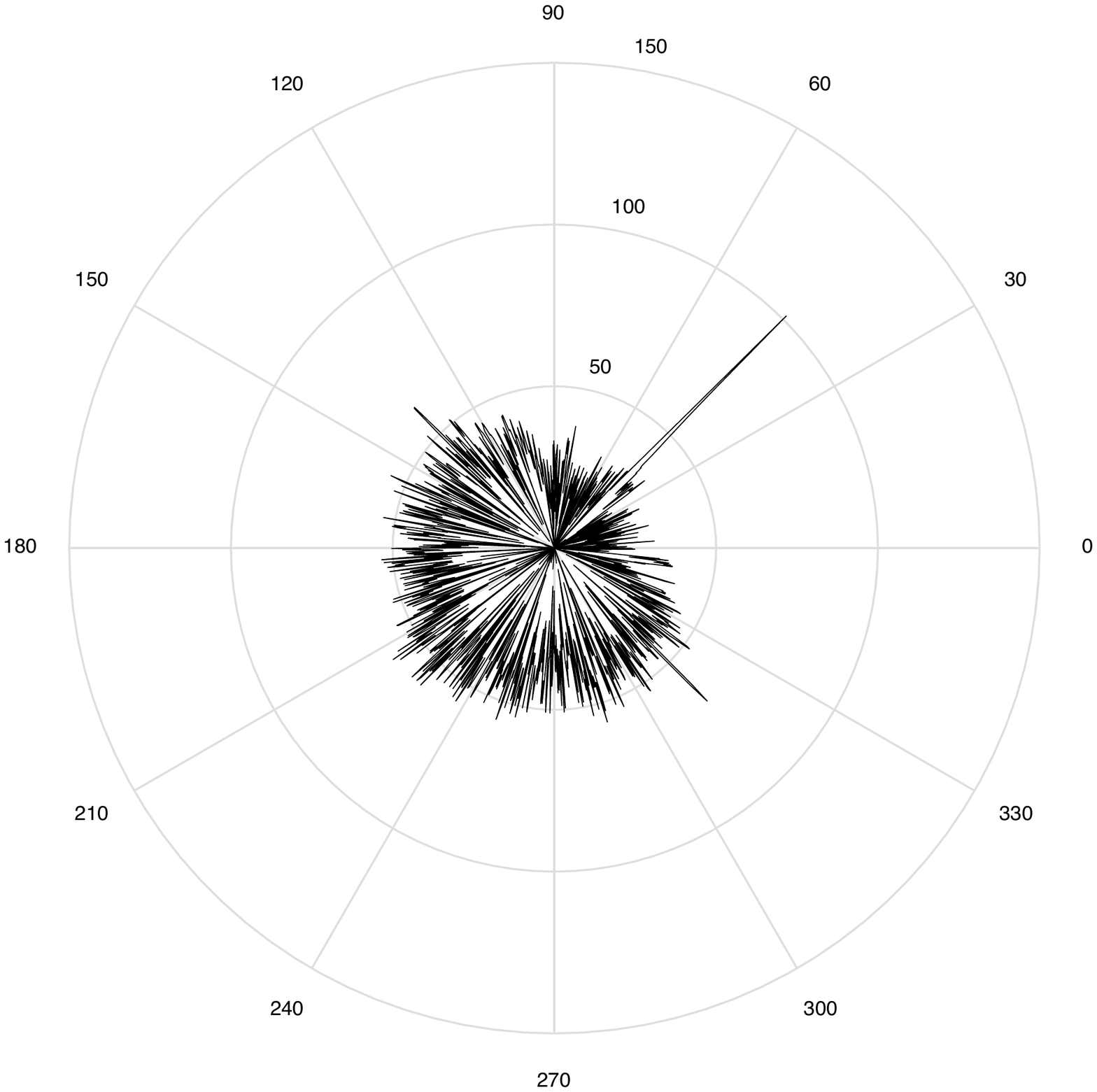}\includegraphics[width=55mm]{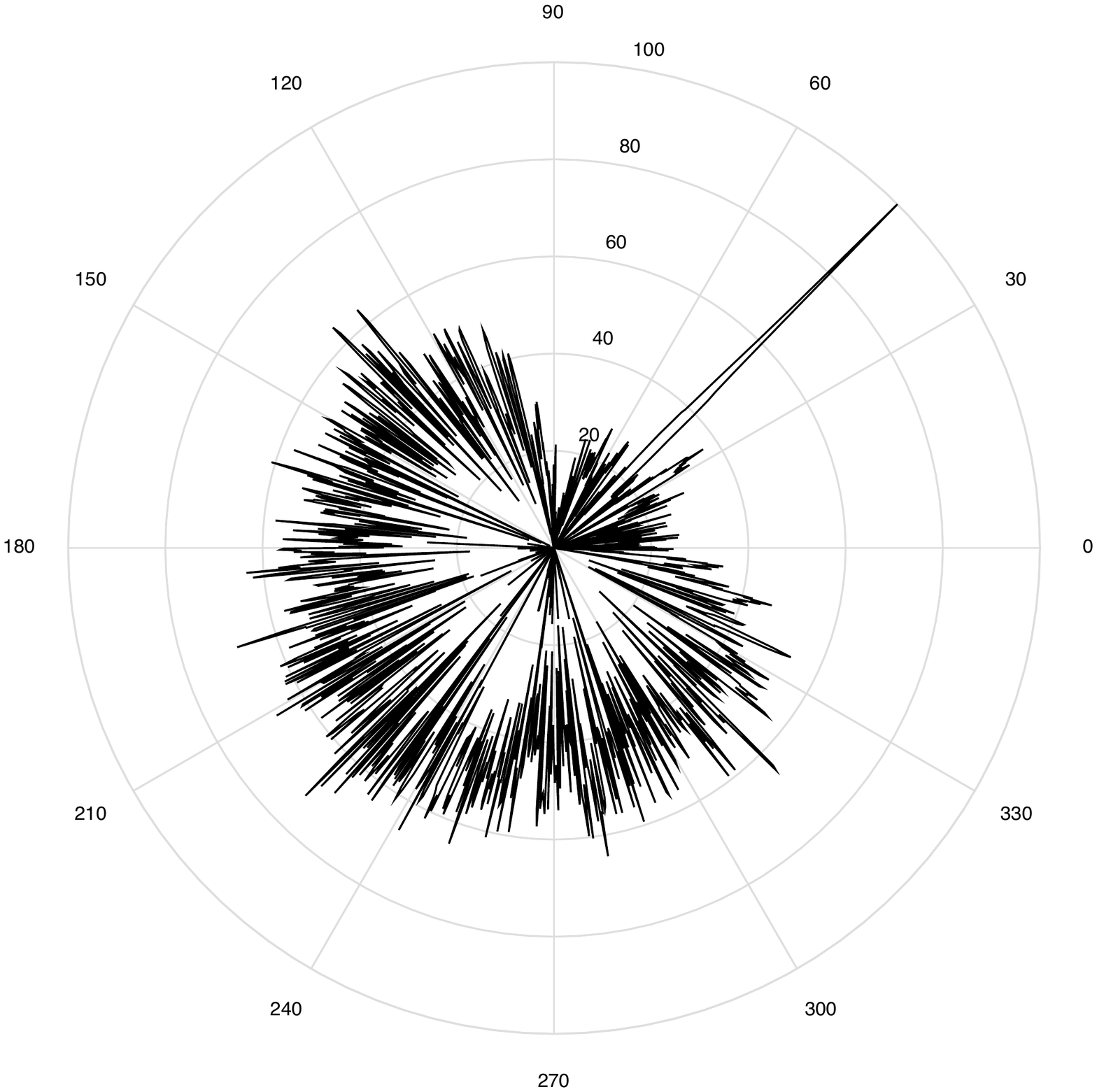}
\caption{RCS for 10,240 line segment scatterers occupying regions of sizes (a) $80\lambda\times 80\lambda$ (left), each scatterer is about 0.4$\lambda$; (b) $160\lambda\times 160\lambda$ (center), each scatterer is about 0.8$\lambda$; and (c)  40,960 line segment scatterers occupying a region of size $160\lambda\times 160\lambda$ (right), each scatterer is about 0.4$\lambda$; for a plane wave incident field making an angle of $45^\circ$ with the $y$-axis.}
\label{fig:RCS}
\end{figure}

We conclude with an illustration in Figure~\ref{fig:Photonic} of the performance of our DD solver for simulation of wave propagation in photonic crystal like structures such as those depicted therein. The geometric configuration is made up of a collection of circles such that the distance between them equals their diameter and a channel defect $0.6\lambda$.
\begin{figure}
  \centering
  \includegraphics[width=55mm]{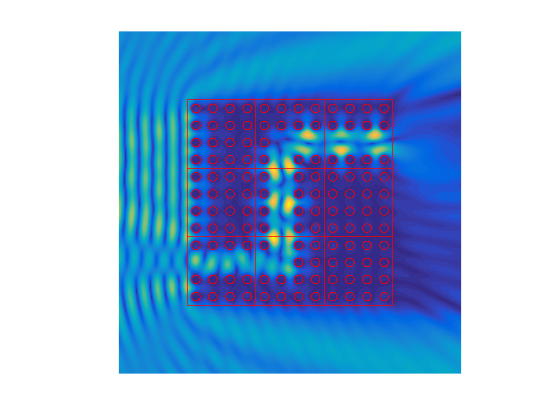}\includegraphics[width=55mm]{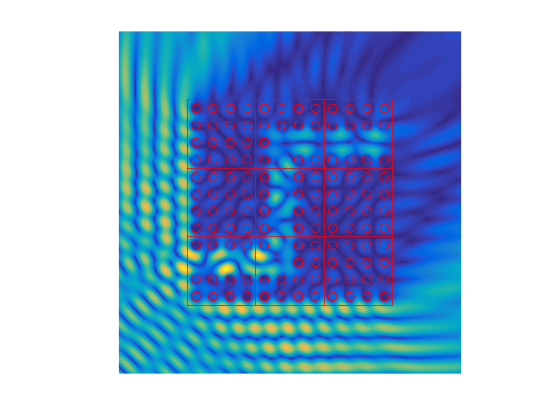}\includegraphics[width=55mm]{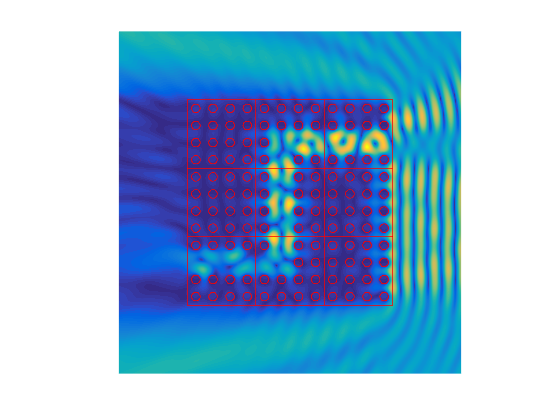}\\
\caption{Simulation of propagation through a channel defect and partitioning in subdomains. Computational cost of our DD algorithm was 3 sec. The angle of incidence was $-90^\circ$ (left), $45^\circ$ (center), and $90^\circ$ (right) with the vertical $y$-axis.}
\label{fig:Photonic}
\end{figure}

\section{Conclusions}

We presented a Schur complement DD solver based on integral equations for the solution of two dimensional frequency domain multiple scattering problems. Our algorithm provides a direct solver for the solution of large multiple scattering problems for which the direct BIE approach is out of reach. Extensions to three dimension configurations are currently underway. 

\section*{Acknowledgments}
Catalin Turc gratefully acknowledge support from NSF through contract DMS-1312169. Yassine Boubendir gratefully acknowledge support from NSFthrough contract DMS-1319720.

\bibliography{biblioM}

\end{document}